\newtheorem{theorem}{Theorem}[section]
\newtheorem{lemma}[theorem]{Lemma}
\newtheorem{maintheorem}[theorem]{Main Theorem}
\newtheorem{corollary}[theorem]{Corollary}
\newtheorem{definition}[theorem]{Definition}
\newtheorem{proposition}[theorem]{Proposition}
\numberwithin{equation}{section} \thispagestyle{empty}
\begin{document}

\title{Connectivity of single-element coextensions of a binary matroid}
\author{Ganesh Mundhe$^1$ and Y. M. Borse$^2$}

\address{\rm 1. Army Institute of Technology, Pune-411015, INDIA.}
\email{{ ganumundhe@gmail.com }}
\address{\rm 2. Department of Mathematics, Savitribai Phule  Pune University, Pune-411007, INDIA.}

\email{{ ymborse11@gmail.com }}
 
 \maketitle{}
 \noindent
\begin{abstract}  Given an $n$-connected binary matroid, we obtain a necessary and sufficient condition for its single-element coextensions to be $n$-connected.
\end{abstract}

\noindent \textbf{Keywords:} coextension, element splitting, point-splitting, binary matroids, $n$-connected

\noindent \textbf{Subject Classification (2010):} 05B35, 05C50
\section{Introduction}

For undefined terminologies, we refer to Oxley \cite{oxley2006matroid}. The point-splitting operation is a fundamental operation  in respect of connectivity of graphs.  It is used to characterize 3-connected graphs in  the classical Tutte's Wheel Theorem \cite{tutte1961theory} and also to characterize 4-connected graphs by Slater \cite{slater1974classification}. This operation is defined as follows.
\begin{definition} [\cite{slater1974classification}] \label{nps}
	Let $G$ be a graph with a vertex $v$ of degree at least $ 2n-2$ and let $ T = \{ vv_1, vv_2,  \dots, vv_{n-1}\}$ be a set of $n-1$ edges of $G$ incident to $v$. Let $G_T'$ be the graph obtained from $G$ by replacing $v$ by two adjacent vertices $u$ and $w$ such that $u$ is adjacent to $v_1, v_2, \dots, v_{n-1},$ and $w $ is adjacent to the vertices which are adjacent to $v$ except $v_1, v_2, \dots, v_{n-1}$.  We say $G_T'$ arises from $G$ by $n$-point splitting (see the following figure). 
\end{definition}

\begin{center} 
\unitlength .9mm 
\linethickness{0.4pt}
\ifx\plotpoint\undefined\newsavebox{\plotpoint}\fi 
\begin{picture}(80.225,23.75)(0,0)
\put(5.81,8.12){\circle*{1.33}}
\put(48.06,8.12){\circle*{1.33}}
\put(21.06,8.12){\circle*{1.33}}
\put(63.56,8.12){\circle*{1.33}}
\put(21.06,21.12){\circle*{1.33}}
\put(59.56,21.12){\circle*{1.33}}
\put(68.06,21.12){\circle*{1.33}}
\put(37.06,8.12){\circle*{1.33}}
\put(79.56,8.12){\circle*{1.33}}
\put(37.06,21.12){\circle*{1.33}}
\put(79.56,21.12){\circle*{1.33}}
\put(21.06,8.12){\line(-1,0){15}}
\put(21.06,21.12){\line(1,0){16}}
\put(37.06,21.12){\line(0,-1){13}}
\put(20.81,20.87){\line(0,-1){13}}
\put(79.56,21.12){\line(0,-1){13}}
\put(37.06,8.12){\line(-1,0){16}}
\put(79.56,8.12){\line(-1,0){16}}
\qbezier(6.06,8.12)(6.56,8.75)(6.06,7.87)
\qbezier(48.56,8.12)(49.06,8.75)(48.56,7.87)
\qbezier(6.06,7.87)(5.94,8.87)(6.31,7.87)
\qbezier(6.31,7.87)(5.44,9.12)(6.06,8.37)
\put(5.56,21.12){\circle*{1.33}}
\put(48.06,21.12){\circle*{1.33}}
\put(5.56,21.12){\line(1,0){16}}
\multiput(21.56,21.12)(-.107143,.035714){7}{\line(-1,0){.107143}}
\put(20.81,21.37){\line(1,0){.25}}
\multiput(20.81,8.37)(-.0454545455,.0373900293){341}{\line(-1,0){.0454545455}}
\multiput(63.31,8.37)(-.0454545455,.0373900293){341}{\line(-1,0){.0454545455}}
\put(5.56,21.12){\line(0,-1){12.75}}
\put(48.06,21.12){\line(0,-1){12.75}}
\multiput(20.81,21.37)(-.0432276657,-.0374639769){347}{\line(-1,0){.0432276657}}
\multiput(21,21)(.045389049,-.0374639769){347}{\line(1,0){.045389049}}
\multiput(21,8)(.0461095101,.0374639769){347}{\line(1,0){.0461095101}}
\multiput(63.5,8)(.0461095101,.0374639769){347}{\line(1,0){.0461095101}}
\put(48,21.25){\line(1,0){11.5}}
\multiput(59.5,21.25)(-.0373754153,-.0431893688){301}{\line(0,-1){.0431893688}}
\put(79.5,21){\line(-1,0){.25}}
\multiput(68,21.25)(.0374203822,-.0429936306){314}{\line(0,-1){.0429936306}}
\put(79.75,7.75){\line(0,1){.25}}
\put(20.75,23.75){\makebox(0,0)[cc]{$v$}}
\put(59.5,23.75){\makebox(0,0)[cc]{$u$}}
\put(68,23.75){\makebox(0,0)[cc]{$w$}}
\put(59.25,21.25){\line(1,0){8.75}}
\put(63.75,18.75){\makebox(0,0)[cc]{}}
\put(1.75,20.75){\makebox(0,0)[cc]{$v_1$}}
\put(1.75,7.75){\makebox(0,0)[cc]{$v_2$}}
\put(43.75,20.75){\makebox(0,0)[cc]{$v_1$}}
\put(43.75,7.75){\makebox(0,0)[cc]{$v_2$}}
\put(21,2){\makebox(0,0)[cc]{$G$}}
\put(63.5,2){\makebox(0,0)[cc]{$G_T'$}}
\multiput(68,21.25)(-.037190083,-.107438017){121}{\line(0,-1){.107438017}}
\put(67.75,21.25){\line(1,0){11.5}}
\put(63.5,8.25){\line(-1,0){16}}
\end{picture}

\end{center} 

Slater \cite{slater1974classification} obtained the following result to characterize   $4$-connected graphs.
\begin{theorem} [\cite{slater1974classification}]  \label{c5slater}
	Let $G$ be an $n$-connected graph and let $T$ be a set of $n-1$ edges incident to a vertex of degree at least $2n-2$.  Then the graph $G'_T$ is $n$-connected. 
\end{theorem}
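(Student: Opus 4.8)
The plan is to verify $n$-connectivity of $G'_T$ in its usual guise: since $|V(G'_T)| = |V(G)| + 1 \ge n+2$, it suffices to show that $G'_T$ has no vertex cut of size at most $n-1$. Two degree facts about $G'_T$ will drive everything. First, the vertex $u$ has degree exactly $n$, being joined to $v_1,\dots,v_{n-1}$ and to $w$. Second, $w$ has degree at least $n$: it is joined to $u$ and to the $\deg_G(v)-(n-1) \ge (2n-2)-(n-1) = n-1$ neighbours of $v$ outside $T$; this is the only place the hypothesis $\deg_G(v)\ge 2n-2$ enters. In particular, if $X\subseteq V(G'_T)$ with $|X|\le n-1$, then neither $u$ nor $w$ can be an isolated vertex of $G'_T-X$ unless it already lies in $X$, since an isolated vertex outside $X$ would force all of its neighbours — of which there are at least $n$ — into $X$.

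Now suppose for contradiction that $S$ is a vertex cut of $G'_T$ with $|S|\le n-1$. The idea is to reverse the splitting and produce a small cut of $G$. Set $S' = (S\setminus\{u,w\})\cup\{v\}$ if $S$ meets $\{u,w\}$, and $S'=S$ otherwise; since $S\setminus\{u,w\}\subseteq V(G'_T)\setminus\{u,w\}=V(G)\setminus\{v\}$, this is a subset of $V(G)$ with $|S'|\le|S|\le n-1$ (and $|S'|\le n-2$ when $u,w\in S$). I would then split into cases according to $S\cap\{u,w\}$ and identify $G-S'$ with a modification of the disconnected graph $G'_T-S$: if $u,w\in S$ then $G-S'=G'_T-S$ verbatim, as no edge of $G'_T-S$ is incident with $u$, $w$, or $v$; if exactly one of $u,w$ lies outside $S$, call it $z$, then $G-S'=(G'_T-S)-z$; and if $u,w\notin S$ then $G-S'=(G'_T-S)/uw$, the contraction of the edge $uw$ (present in $G'_T-S$ because $u,w\notin S$), with the contracted vertex playing the role of $v$.

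It remains to see that $G-S'$ is disconnected in each case. When $u,w\in S$ this is immediate. When $u,w\notin S$, the edge $uw$ lies inside a single component of $G'_T-S$, so contracting it leaves the number of components unchanged and $G-S'$ is still disconnected. The only delicate case is the middle one, and here the degree remark is decisive: the surviving vertex $z\in\{u,w\}$ is not isolated in $G'_T-S$, so its component contains a further vertex; deleting $z$ therefore leaves that component non-empty while not touching the other components of $G'_T-S$, so $G-S'$ again has at least two non-empty components. In all cases $G-S'$ is disconnected, so $S'$ is a vertex cut of $G$ of size at most $n-1$, contradicting the $n$-connectivity of $G$. Hence $G'_T$ admits no such cut and is $n$-connected.

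The main obstacle is choosing the correct lift $S\mapsto S'$ of a hypothetical cut and, above all, dispatching the mixed case: one must exclude the possibility that removing the single leftover new vertex collapses two components of $G'_T-S$ into one, and it is exactly the bound $\deg_G(v)\ge 2n-2$ — equivalently $\deg_{G'_T}(u),\deg_{G'_T}(w)\ge n$ — that rules this out. The argument in fact proves a general ``un-splitting'' statement: if $e=uw$ is an edge of a graph $H$ with $|V(H)|\ge n+1$ and $\deg_H(u),\deg_H(w)\ge n$, then $H$ is $n$-connected whenever $H/e$ is; Slater's theorem is the instance $H=G'_T$, $e=uw$, $H/e=G$.
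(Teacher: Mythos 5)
Your proof is correct, but it takes a genuinely different route from the paper. The paper never proves Theorem \ref{c5slater} directly: it is quoted from Slater, and what the paper actually establishes (Theorem \ref{c7gtth}) is the weaker statement in which $G$ is additionally assumed to have girth at least $n$, obtained by passing to the circuit matroid $M(G)$ via Theorem \ref{c1gnmn}, verifying the cocircuit inequality $|Q|\geq 2|Q\cap T|$ by an analysis of bonds of $G$, and invoking Proposition \ref{c5pro}; the girth hypothesis is unavoidable on that route because $M(G)$ is $n$-connected only when $G$ has no short cycles. You instead argue entirely inside graph theory: you lift a hypothetical cut $S$ of $G'_T$ with $|S|\le n-1$ back to a cut $S'$ of $G$ by case analysis on $S\cap\{u,w\}$, using $\deg_{G'_T}(u)=n$ and $\deg_{G'_T}(w)\ge n$ (the only place $\deg_G(v)\ge 2n-2$ enters) to guarantee that a surviving new vertex is not isolated, and the contraction identity $G=G'_T/uw$ in the remaining case. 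All three cases check out (in the mixed case the only danger is that the deleted vertex $z$ was itself a whole component, not that deletion merges components --- your degree bound excludes exactly this, though your closing sentence misdescribes the risk), so you obtain the full, unrestricted Slater theorem, together with the clean ``un-splitting'' lemma that contracting an edge whose endpoints both have degree at least $n$ cannot create $n$-connectivity that was absent. What the paper's approach buys is the matroid generalization (Main Theorem \ref{maint}) of which the graph statement is a special case; what yours buys is the theorem as actually stated, with no girth restriction and no matroid machinery.
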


In this paper, we extend the above theorem to binary matroids.

Azadi \cite{azadi2001generalized} extended the $n$-point splitting operation on graphs to binary matroids as follows. 
\begin{definition} [\cite{azadi2001generalized}] \label{defel}
 Let $M$ be a binary matroid with standard matrix representation $A$ over the field $GF(2)$ and let $T$ be a subset of the ground set  $E(M)$ of $M$.  Let $A'_T$ be the matrix obtained from $A$ by adjoining one extra row to matrix $A$ whose entries are 1 in the columns labeled by the elements of $T$ and 0 otherwise and also having one extra column labeled by $a$ with 1 in the last row and 0 elsewhere. Denote the vector matroid of $A'_T$  by $M'_T.$   We say that $M_T'$ is obtained from $M$ by element splitting with respect to the set $T$.
\end{definition}

For example, the following matrices $A$ and $A_T'$ represent the Fano matroid $F_7$ and its element splitting matroid with respect to  the set $T=\{1,2,3\} \subset E(F_7).$ 

\begin{center} 
 	$A = \bordermatrix{  ~&1&2&3&4&5&6&7\cr
 	                     ~&1&0&0&1&1&0&1\cr
 	                     ~&0&1&0&1&1&1&0\cr
 	                     ~&0&0&1&1&0&1&1},$
 	                                         $A'_T = \bordermatrix{     ~&1&2&3&4&5&6&7&a\cr
						 	                     	~&1&0&0&1&1&0&1&0\cr
						 	                     	~&0&1&0&1&1&1&0&0\cr
						 	                     	~&0&0&1&1&0&1&1&0\cr
					 	                     	    ~&1&1&1&0&0&0&0&1}$.   
                             \end{center}

Given a graph $H$, let $M(H)$ denote the circuit matroid of $H$.  A matroid $N$ is a \textit{single-element coextension} of a matroid $M$ if $N/e=M$ for some element $e$ of $N$.  

Definition \ref{defel} is an extension of Definition \ref{nps} as $ M(G)_T'=M(G_T')$ for a set  $T$ of  edges incident to a vertex  of a graph $G.$ 
Note that if $M$ is a binary matroid, then the element splitting matroid $M_T'$ is  also binary and it is a coextension of $M$ by the element $a$ as $ M'_T /a = M .$    In  fact, we prove in Lemma \ref{elesingle} that every coextension of a binary matroid $M$ by a non-loop and non-coloop element is the element splitting matroid $M_T'$ for some $T \subset E(M).$


 Dalvi et al. \cite{dalvi2009forbidden, dalvi2011forbidden}  characterized  the graphic (cographic) matroids $M$ whose single-element coextensions $M_T'$ are  again graphic (cographic).   Let $M$ be an $n$-connected binary matroid.  Borse and Mundhe \cite{bm}  obtained sufficient conditions for  the matroid $M_T'\backslash a$ to be $n$-connected.   In this paper, we obtain a necessary and sufficient condition for $M_T'$ to be $n$-connected.  The following is the main theorem of the paper.

  \begin{maintheorem}\label{maint}
  	Let $n\geq 2$ be an integer and $M$ be an $n$-connected binary matroid with $|E(M)| \geq 2n-2$. Suppose $T \subset E(M)$ with $|T|=n-1$.  Then  $M'_T$ is $n$-connected if and only if  $|Q|\geq 2|Q\cap T|$ for every cocircuit $Q$ of $M$ intersecting $T.$
  \end{maintheorem}

 We also prove that Theorem \ref{c5slater} follows from Main Theorem \ref{maint} under a mild restriction.  
 
 Azadi \cite{azadi2001generalized} obtained the following result for $M_T'$ to be $n$-connected, in terms of the circuits of $M$ containing an odd number of elements of $T$.
 
 \begin{theorem} [\cite{azadi2001generalized}] \label{c5azadith} 	Let $n\geq 2$ be an integer and $M$ be an $n$-connected binary matroid with $|E(M)| \geq 2n-2$. Suppose $T \subset E(M)$ with $|T|=n-1$. Then $M'_T$ is $n$-connected if and only if  for any set $A \subset E(M)$ with $|A|=n-2$, there exists a circuit $C$ of $M$ containing an odd number of elements of $T$ and is contained in $E(M)-A.$
 \end{theorem}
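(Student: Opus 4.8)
The plan is to analyse the separations of $M'_T$ directly. Write $E=E(M)$. Since $M'_T/a=M$ and the column of $a$ in $A'_T$ is nonzero, $a$ is not a loop and $r(M'_T)=r(M)+1$. First I would establish two rank identities: for $S\subseteq E$, contracting $a$ gives $r_{M'_T}(S\cup a)=r_M(S)+1$, whereas reading off the matrix $A'_T$ shows $r_{M'_T}(S)=r_M(S)$ if $T\cap S$ is a cocycle of $M|S$ (equivalently, $T\cap S$ has even intersection with every circuit of $M|S$), and $r_{M'_T}(S)=r_M(S)+1$ otherwise. These combine to give, for any partition $(X,Y)$ of $E\cup\{a\}$ with $a\in X$ and $X_0:=X\setminus a$, the identity $\lambda_{M'_T}(X)=\lambda_M(X_0)+\varepsilon$, where $\lambda_N(Z)=r_N(Z)+r_N(E(N)\setminus Z)-r(N)$ is the connectivity function and $\varepsilon\in\{0,1\}$ equals $0$ precisely when $T\setminus X_0$ is a cocycle of $M\setminus X_0$, i.e.\ when every circuit of $M$ avoiding $X_0$ meets $T$ in an even number of elements. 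I would also record the standard fact that an $n$-connected matroid with $|E|\ge 2n-2$ has girth and cogirth at least $n$; in particular the $(n-1)$-set $T$ contains no cocircuit of $M$, so $a$ is not a coloop of $M'_T$.

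For the necessity direction I would suppose $M'_T$ has a $k$-separation $(X,Y)$ with $1\le k\le n-1$ and $a\in X$, and set $X_0=X\setminus a$, so $\lambda_M(X_0)+\varepsilon\le k-1$ with $|X_0|\ge k-1$ and $|Y|\ge k$. If $|X_0|\ge k$, then $(X_0,Y)$ is a $j$-separation of $M$ for some $j$ with $1\le j\le n-1$ (one checks $j=k$ when $\varepsilon=0$ and $j=k-1$ when $\varepsilon=1$, the case $k=1,\varepsilon=1$ being vacuous), contradicting $n$-connectivity; hence $|X_0|=k-1\le n-2$. If moreover $\varepsilon=1$, then $\lambda_M(X_0)\le k-2<|X_0|$, which forces $X_0$ to contain a circuit of $M$ (if $X_0$ is dependent) or a cocircuit of $M$ (if $X_0$ is independent, to make $\lambda_M(X_0)$ that small), each of size at most $n-2<n$, contradicting the girth/cogirth bound. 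So $\varepsilon=0$, i.e.\ $T\setminus X_0$ is a cocycle of $M\setminus X_0$, which means no circuit of $M$ contained in $E\setminus X_0$ meets $T$ in an odd number of elements. Enlarging $X_0$ to any set $A$ with $|A|=n-2$ (possible since $|E|\ge 2n-2$) only restricts the family of circuits to be checked, so the same holds for $A$; thus the stated condition fails.

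For sufficiency I would run this in reverse: if some $A$ with $|A|=n-2$ admits no circuit of $M$ in $E\setminus A$ with odd intersection with $T$, then $T\setminus A$ is a cocycle of $M\setminus A$, so taking $X=A\cup\{a\}$ and $Y=E\setminus A$, with $|X|=n-1$ and $|Y|=|E|-(n-2)\ge n$, the identity of the first paragraph (with $\varepsilon=0$) gives $\lambda_{M'_T}(X)=\lambda_M(A)\le r_M(A)\le|A|=n-2$. Hence $(X,Y)$ is an $(n-1)$-separation of $M'_T$, so $M'_T$ is not $n$-connected.

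I expect the main obstacle to be the elimination of the ``$\varepsilon=1$'' and ``$|X_0|\ge k$'' cases in the necessity direction: this is exactly where one must use that $M$ is $n$-connected (beyond the bare rank bookkeeping), via the girth/cogirth estimate together with $|E|\ge 2n-2$ and $|T|=n-1$, and it is the only genuinely nontrivial point. As a cross-check — and an alternative, shorter route granting the Main Theorem stated above — one can instead verify directly that the cocircuit condition of the Main Theorem is equivalent to the present circuit condition: the forward implication is immediate, and for the converse one lifts $T\setminus A$ to a cocycle $Z$ of $M$ with $T\setminus A\subseteq Z\subseteq T\cup A$, writes $Z$ as a disjoint union of cocircuits $Q_1,\dots,Q_m$ of $M$, and notes $\sum_i|Q_i\cap(T\setminus A)|=|T\setminus A|=|A\setminus T|+1>\sum_i|Q_i\setminus T|$, so some $Q_i$ satisfies $|Q_i\cap T|>|Q_i\setminus T|$, i.e.\ $|Q_i|<2|Q_i\cap T|$.
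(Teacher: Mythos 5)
Your argument is correct and follows essentially the same route as the paper's: both directions rest on the rank formulas for $M'_T$ (Lemma \ref{c5res}), a reduction of any separation to the case where the side containing $a$ meets $E(M)$ in at most $n-2$ elements, and the girth/cogirth bound of Lemma \ref{11}. Your packaging via the identity $\lambda_{M'_T}(X)=\lambda_M(X\setminus\{a\})+\varepsilon$, your disposal of the $\varepsilon=1$ subcase by locating a short circuit or cocircuit inside $X\setminus\{a\}$ instead of exhibiting an $(n-2)$-separation of $M$, and the closing cross-check against the cocircuit condition of the Main Theorem are only minor variations on, or additions to, the paper's proof.
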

We  provide an alternate shorter proof of Theorem \ref{c5azadith} in the third section.

 In Section 2, we provide some properties of $M_T'.$   Main Theorem \ref{maint} is proved in  Section 3.  In the last section,  we discuss consequences of Main Theorem \ref{maint} to the graphs.

\section{Preliminaries} 

We prove below that the single-element coextension of a binary matroid $M$ by a non-loop and non-coloop element is nothing but an element splitting matroid $M_T'$ for some $T\subset E(M)$. 
\begin{lemma} \label{elesingle}
Let $M$ and $N$ be binary matroids. Then  $N$ is a  coextension of $M$ by a non-loop and non-coloop element if and only if $N=M_T'$  for some $T\subset E(M).$
\end{lemma}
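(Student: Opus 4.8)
The plan is to prove both directions by translating between coextensions and matrix representations over $GF(2)$.

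For the ``if'' direction, suppose $N = M_T'$ for some $T \subseteq E(M)$. By Definition~\ref{defel}, $N$ is the vector matroid of the matrix $A_T'$ obtained from a standard representation $A$ of $M$ by adjoining a new row supported on $T$ and a new column $a$ equal to the last standard basis vector. Deleting this new column and row recovers $A$; equivalently, contracting the element $a$ in $N$ gives $M$, since $a$ is labeled by a unit column and contracting it amounts to pivoting on that column's single nonzero entry and then deleting the resulting row. Thus $N/a = M$, so $N$ is a single-element coextension of $M$. It remains to check that $a$ is neither a loop nor a coloop of $N$: the column labeled $a$ is nonzero, so $a$ is not a loop; and $a$ is a coloop iff $N \backslash a$ has rank strictly less than $N$, i.e.\ iff deleting the $a$-column drops the rank, which happens precisely when the extra row is the zero row, i.e.\ when $T = \varnothing$. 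To handle $T = \varnothing$ cleanly one either excludes it or observes $M_\varnothing'$ has $a$ as a coloop, so we should take $T \neq \varnothing$ (or note the statement is read with this understanding); I would add the remark that for $T\neq\varnothing$ the new row is nonzero and independent of the old rows (its support lies in columns where $A$'s column space is, but the unit column $a$ forces independence), hence $r(N\backslash a) = r(N)$ and $a$ is not a coloop.

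For the ``only if'' direction, suppose $N$ is a binary coextension of $M$ by a non-loop, non-coloop element $a$, so $N \backslash a$ or rather $N/a = M$. Fix a binary representation $B$ of $N$ with columns indexed by $E(M) \cup \{a\}$. Since $a$ is not a loop, its column is nonzero; since $a$ is not a coloop, $r(N \backslash a) = r(N) = r(M) + 1$. Perform row operations over $GF(2)$ to bring the $a$-column into the form of a unit vector $e_k$ (possible as it is nonzero), and then, using that the remaining columns span a space of dimension $r(M)$ complementary enough, further row-reduce so that $B$ has the block shape $\left[\begin{smallmatrix} A & 0 \\ t & 1\end{smallmatrix}\right]$ where the top block is a representation of $N/a$ restricted appropriately and $t$ is the $0/1$ row vector recording which columns of $E(M)$ have a $1$ in row $k$. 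Because contracting $a$ deletes row $k$ and column $a$, the top block $A$ represents $N/a = M$. Setting $T = \{\, e \in E(M) : t_e = 1 \,\}$ gives exactly $B = A_T'$ up to the standardization in Definition~\ref{defel}, so $N = M_T'$.

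The main obstacle is the bookkeeping in the ``only if'' direction: one must argue that after the row reduction putting the $a$-column into unit form, the resulting top-left block is genuinely a representation of $M$ and not merely of $M$ with some coordinate changes that could alter which matroid it represents. The key point is that row operations preserve the vector matroid, and that deleting the last column together with the last row from $\left[\begin{smallmatrix} A & 0 \\ t & 1\end{smallmatrix}\right]$ is the matrix-level incarnation of the contraction $N/a$ for binary matroids (projecting away the $a$-coordinate); this is standard but worth stating carefully. A secondary subtlety is the degenerate case $T = \varnothing$, which must be excluded or flagged since $M_\varnothing'$ makes $a$ a coloop, contradicting the hypothesis; I would dispose of it in one sentence at the start.
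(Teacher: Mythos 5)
Your proposal is correct and follows essentially the same route as the paper: both directions are handled by manipulating the $GF(2)$ representation so that the column of $a$ is a unit vector and $T$ is read off as the support of the new row, with the non-loop/non-coloop hypotheses guaranteeing that this column and row are nonzero. The only cosmetic difference is that the paper obtains $T$ as $T_1-\{a\}$ for a cocircuit $T_1$ of $N$ containing $a$, which makes $T\neq\varnothing$ immediate from the non-coloop assumption, whereas you reach the same conclusion by row reduction and flag the degenerate case $T=\varnothing$ separately.
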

\begin{proof}
Suppose $N=M_T'$ for some $T\subset E(M)$. Then the ground set of $N$ is $E(M) \cup \{a\}$  and $N/a=M$.  Hence $N$ is a coextension of $M$ by the element $a.$ Let $A$ be the standard matrix representation of $M$ over $GF(2)$. By Definition \ref{defel}, in the matrix $A_T'$ of $M_T',$  the column labeled by $a$ has 1 in the last row and 0 elsewhere, and the columns labeled by the elements of $T$ have 1 in the last row.  This shows that $a$ is neither a loop nor a coloop of $N.$

Conversely, suppose $N$ is a coextension of $M$ by a non-loop and non-coloop  element $a.$   Let $T_1$ be a cocircuit of $N$ containing $a$ and let $T=T_1-\{a\}$. Then $T$ is a non-empty subset of $E(M)$.  We can write the standard matrix representation  $B$ of $N$  such that the column of $B$ labeled by $a$ has entry 1 in the last row and  0 elsewhere.  Since $T_1$ is  a cocircuit of $N$,  the last row  of $B$ contains 1 in the columns corresponding to $T_1$ and 0 elsewhere.  Let $C$ be the matrix obtained from $B$ by deleting the last row and the column corresponding to $a$. Then $M[C]=N/a=M$.    Thus $B$ can be obtained from $C$ by adding one extra row which has entries 1 below the elements corresponding to $T$ and then adding a column labeled by $a$ which has entry 1 in the last row and 0 elsewhere. Therefore, by Definition \ref{defel},  $B=C_T'$. Hence $N=M[B]=M[C_T']=M_T'$.   
\end{proof}

 Henceforth, we use the notation $M_T'$ for a single-element coextension of a binary matroid $M.$

We need the following results.

\begin{lemma} [\cite{azadi2001generalized}] \label{c5cesp}
	Let $M$ be a binary matroid and $T \subseteq E(M)$. If $\mathcal{C}$ is the collection of circuits of $M$, then every circuit of $M'_T$  belongs to one of the following type.
	\begin{enumerate} [(i).]
		
		\item  $\mathcal{C}_1=\{C\in \mathcal{C} \colon |C\cap T|~is~even \}$
		\item  $\mathcal{C}_2=\{C\cup \{a\} \colon C\in \mathcal{C}~and ~contains~an~odd ~number~of~elements~ of~T\}$
		\item  $\mathcal{C}_3=set~of~minimal~members~of~\{C_1\cup C_2 \colon C_1, C_2 \in \mathcal{C},~C_1\cap C_2= \emptyset~ ~and~C_1 ~and ~C_2~each~\\ contains~an~odd~number~of~ elements~of~T~such~ that~C_1\cup C_2~does ~not ~contain~any ~member~of~\mathcal{C}_1.\}$
	\end{enumerate}
	
\end{lemma}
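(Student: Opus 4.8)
The plan is to argue through the representing matrices over $GF(2)$. Recall that for a binary matroid the cycles---the disjoint unions of circuits---are exactly the supports of the vectors in the null space of any representing matrix, so the circuits are precisely the minimal nonempty such supports. Writing $A$ for a standard representation of $M$, the matrix of $M'_T$ in Definition \ref{defel} is, schematically, $\begin{pmatrix}A&0\\ \chi_T&1\end{pmatrix}$, where $\chi_T$ is the row indicating $T$ and the final column is the one labelled $a$. A one-line check shows $(x,x_a)$ is in the null space of this matrix exactly when $x$ is in the null space of $A$ and $x_a=\sum_{e\in T}x_e$. Hence the cycles of $M'_T$ are precisely the sets $D$, for $D$ a cycle of $M$ with $|D\cap T|$ even, together with the sets $D\cup\{a\}$, for $D$ a cycle of $M$ with $|D\cap T|$ odd. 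I shall also use the standard fact that a nonempty cycle of a binary matroid is a disjoint union of circuits.

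Now let $C'$ be a circuit of $M'_T$, that is, a minimal nonempty cycle of $M'_T$. If $a\in C'$, then $C'=D\cup\{a\}$ for a nonempty cycle $D$ of $M$ with $|D\cap T|$ odd, and since $D'\cup\{a\}\subsetneq C'$ for every nonempty cycle $D'\subsetneq D$ with $|D'\cap T|$ odd, $D$ must be minimal among nonempty cycles of $M$ meeting $T$ oddly. Writing $D$ as a disjoint union of circuits of $M$, an odd (hence positive) number of them meet $T$ oddly, so if $D$ were not a single circuit one of them would contradict the minimality of $D$; thus $D$ is an odd circuit of $M$ and $C'\in\mathcal{C}_2$. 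If instead $a\notin C'$, then $C'=D$ for a nonempty cycle $D$ of $M$ with $|D\cap T|$ even, and as any cycle of $M'_T$ strictly inside the $a$-free set $C'$ is itself $a$-free, $D$ is minimal among nonempty cycles of $M$ meeting $T$ evenly. Decompose $D=C_1\cup\dots\cup C_k$ into disjoint circuits of $M$: if $k=1$ then $C'\in\mathcal{C}_1$; if $k\ge 2$, then no $C_i$ meets $T$ evenly (it would be a smaller even cycle), so every $C_i$ is odd, $k$ is even, and $k\ge 4$ is impossible (then $C_1\cup C_2$ is a smaller even cycle), forcing $k=2$. Thus $D=C_1\cup C_2$ with $C_1,C_2$ disjoint odd circuits; $D$ contains no even circuit of $M$ (such a circuit is a proper even sub-cycle, as $D$ is not a circuit), and $D$ properly contains no union of two disjoint odd circuits (such a union is a nonempty even cycle). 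Hence $D$ is a minimal member of the family defining $\mathcal{C}_3$, so $C'\in\mathcal{C}_3$.

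Putting the cases together shows every circuit of $M'_T$ lies in $\mathcal{C}_1\cup\mathcal{C}_2\cup\mathcal{C}_3$. The main obstacle I anticipate is the case $a\notin C'$: one has to see that a minimal nonempty even cycle of $M$ that is not a single circuit must be a union of exactly two disjoint odd circuits and must then be a \emph{minimal} member of the specific family in (iii); this is handled by parity bookkeeping together with the decomposition of cycles into disjoint circuits, while the rest (the case $a\in C'$ and the matrix computation) is routine.
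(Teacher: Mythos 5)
The paper does not prove this lemma; it is quoted verbatim from Azadi's thesis \cite{azadi2001generalized}, so there is no in-paper argument to compare yours against. Your proof is correct and self-contained: the identification of the cycle space of $M'_T$ as $\{(x,x_a):Ax=0,\ x_a=\sum_{e\in T}x_e\}$ is right, and from it the three cases fall out exactly as you describe. The only delicate point is the $a\notin C'$ case, and you handle it properly: minimality among nonempty even cycles forces the disjoint-circuit decomposition to have $k=1$ (giving $\mathcal{C}_1$) or $k=2$ with both pieces odd (giving a member of the family in (iii)), and your final observations --- that $D$ contains no even circuit because such a circuit would be a \emph{proper} even sub-cycle, and that any member of the family properly inside $D$ would again be a proper nonempty even cycle --- are exactly what is needed to place $D$ in $\mathcal{C}_3$ rather than merely in the unminimalized family. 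This cycle-space argument is the standard route for such statements about binary matroids and is, if anything, cleaner than a direct circuit-elimination argument would be.
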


\begin{lemma}[\cite{azanchiler2005some}] \label{c5res}
	Let $M$ be a binary matroid.  Suppose $r$ and $r'$ are the rank functions of $M$ and $M'_T$, respectively. If $A \subset E(M) \cup\{a\}$, then rank of $A$ is given by
	\begin{enumerate}[(i).]
		\item $r'(A)=r(A-\{a\})+1$ if $a \in A$.
		\item $r'(A)=r(A)+1$ if $a \notin A$ and $A$ contains a circuit $C$ of $M$ with $|C\cap T|$ odd.
		\item  $r'(A)=r(A)$ if $a\notin A$ and $A$ does not contain any circuit $C$ of $M$ with $|C\cap T|$  odd.
	\end{enumerate}
\end{lemma}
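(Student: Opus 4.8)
The plan is to compute each rank directly from the binary matrix representations of $M$ and $M'_T$, reading off ranks as dimensions of column spans over $GF(2)$. Let $P$ be a standard matrix representation of $M$, so the rows of $P$ are independent and span the cocycle space $C^*(M)$, while for any $X\subseteq E(M)$ the matroid rank $r(X)$ equals the column rank of the submatrix $P|_X$. By Definition \ref{defel}, the matrix $P'_T$ of $M'_T$ is obtained from $P$ by appending one bottom row $\rho$, whose entry in column $e\in E(M)$ is $1$ exactly when $e\in T$, together with a new column $a$ equal to the last standard unit vector $\varepsilon$ (a single $1$ in the bottom row). Then $r'(X)$ is the column rank of the corresponding submatrix of $P'_T$, and the argument splits according to whether $a\in X$.

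For part (i), when $a\in X$ the distinguished column is $\varepsilon$, which lies in the column span; projecting onto the first coordinates (forgetting the bottom entry) maps the span of the selected columns onto the span of the columns of $P|_{X-\{a\}}$, of dimension $r(X-\{a\})$, with kernel $\langle\varepsilon\rangle$. Hence the total dimension is $r(X-\{a\})+1$, which is the asserted formula. This case is routine.

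For parts (ii) and (iii), $a\notin X$, so $X\subseteq E(M)$ and the relevant submatrix is $P|_X$ with the single extra row $\rho|_X=\mathbf 1_{T\cap X}$ appended at the bottom. Since row rank equals column rank, the block $P|_X$ already has rank $r(X)$, so appending one row yields rank $r(X)$ or $r(X)+1$, the first occurring precisely when $\rho|_X$ lies in the row space of $P|_X$. Over $GF(2)$ the row space of a matrix is the orthogonal complement of its null space, so I would test $\rho|_X$ against $\ker(P|_X)$. A vector $z\in GF(2)^X$ with $P|_X\,z=0$ is, after extension by zeros outside $X$, exactly an element of $\ker(P)$ supported on $X$; since $\ker(P)$ is the cycle space of $M$, these $z$ are precisely the cycles of $M$ contained in $X$. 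As the pairing $\rho|_X\cdot z$ counts $|T\cap\operatorname{supp}(z)|$ modulo $2$, the vector $\rho|_X$ is orthogonal to all such $z$ exactly when every cycle contained in $X$ meets $T$ in an even number of elements.

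It remains to reduce this parity test from cycles to circuits. Over $GF(2)$ every nonzero cycle contained in $X$ is a disjoint union of circuits, each of which is again contained in $X$, so every cycle in $X$ meets $T$ evenly if and only if every circuit $C\subseteq X$ satisfies that $|C\cap T|$ is even. Thus the appended row is dependent exactly when no circuit $C\subseteq X$ has $|C\cap T|$ odd, giving $r'(X)=r(X)$ (part (iii)), and independent exactly when some circuit $C\subseteq X$ has $|C\cap T|$ odd, giving $r'(X)=r(X)+1$ (part (ii)). The step I expect to demand the most care is this identification of dependencies: matching $\ker(P|_X)$ with the cycles of $M$ supported on $X$, and justifying that the even-intersection condition, which a priori constrains the entire cycle space, may be verified on individual circuits.
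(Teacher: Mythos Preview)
Your argument is correct. The paper itself does not prove this lemma; it is stated as a cited result from \cite{azanchiler2005some} and used as a black box throughout, so there is no in-paper proof to compare against.

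Your approach---computing ranks directly from the matrix $P'_T$ via column/row rank and identifying $\ker(P|_X)$ with the cycles of $M$ supported in $X$---is a clean, self-contained linear-algebraic argument. The key steps are all sound: for (i), the rank--nullity count with $\varepsilon$ in the span is correct; for (ii) and (iii), the identification of the row space of $P|_X$ with the orthogonal complement of $\ker(P|_X)$ holds over any field, and for binary matroids every element of the cycle space is indeed a disjoint union of circuits, so the reduction of the parity test from cycles to circuits is justified. The one place where you might tighten the exposition is the phrase ``the rows of $P$ \ldots\ span the cocycle space'': this is true but not actually used, and could distract a reader into thinking you need it for the orthogonality step, when in fact all you use is that row space and column null space are orthogonal complements in any matrix.
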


\begin{corollary}\label{c5rmt}
	Let $M$ be a binary matroid and $T \subseteq E(M)$. Then $r'(M_T')=r(M)+1$.
\end{corollary}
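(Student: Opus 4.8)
The plan is to deduce this immediately from the rank formula of Lemma \ref{c5res}. By Definition \ref{defel}, the ground set of $M_T'$ is $E(M)\cup\{a\}$, so $r'(M_T')=r'(E(M)\cup\{a\})$. Since $a\in E(M)\cup\{a\}$, part (i) of Lemma \ref{c5res} applies directly and gives
\[
r'(E(M)\cup\{a\})=r\bigl((E(M)\cup\{a\})-\{a\}\bigr)+1=r(E(M))+1=r(M)+1,
\]
which is exactly the claim.

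Alternatively, one can read this off the matrix representation. If $A$ is a standard representation of $M$ with $r(M)$ rows, then $A_T'$ has $r(M)+1$ rows, and the column labelled $a$ carries a single nonzero entry, namely the $1$ in the last row. Hence any $GF(2)$-linear dependence among the rows of $A_T'$ must have coefficient $0$ on the last row, and therefore restricts to a linear dependence among the rows of $A$, which are independent since $A$ has full row rank; so the rows of $A_T'$ are linearly independent and the rank of $M_T'$ is $r(M)+1$.

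I do not expect any real obstacle here: the statement is a one-step consequence of Lemma \ref{c5res}(i), the only point worth recording being the trivial identification $E(M_T')=E(M)\cup\{a\}$ supplied by Definition \ref{defel}. In particular, the degenerate case $T=\emptyset$, in which $a$ becomes a coloop of $M_T'$, requires no separate treatment, since both arguments above go through verbatim.
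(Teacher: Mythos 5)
Your proof is correct and follows exactly the route the paper intends: the corollary is stated immediately after Lemma \ref{c5res} precisely because it is the one-line application of part (i) to $A=E(M)\cup\{a\}$ that you give. The alternative matrix argument and the remark about $T=\emptyset$ are fine but not needed.
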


\begin{lemma} [\cite{azanchiler}] \label{c5cces} Let $M$ be a binary matroid and $\mathcal{C^*}$ be the collection of cocircuits of $M$. Suppose  $T\subseteq E(M)$ does not contain a cocircuit of $M$. Then every cocircuit of $M'_T$ belongs to one of the following type.
	\begin{enumerate} [(i).]
		\item	$\mathcal{Q}_1^*=\{ (C^*-T) \cup\{a\} \colon C^* \in \mathcal{C^*} ~and ~T~is~a~proper~subset~of~C^*\}$,
		\item	$\mathcal{Q}_2^*=\{C^* \colon C^* \in \mathcal{C^*}\}$,
		\item	$\mathcal{Q}_3^*=\{(C^* \Delta T)\cup\{a\} \colon C^* \in \mathcal{C^*}, 1\leq |C^* \cap T | < |T|~and~C^*~does~  not~contain~D^*-T~for ~any \\~D^* \in \mathcal{C^*}~and~T\subset D^* \},$
		\item	$\mathcal{Q}_4^*=\{((C^*_1 \cup C^*_2 \cup \dots \cup C^*_k)-T)\cup\{a\} \colon k\geq 2,~C^*_i \in \mathcal{C^*}, C^*_i\cap T \neq \emptyset, C^*_i ~are ~mutually~disjoint ~\\ and ~(C^*_1 \cup C^*_2 \cup \dots \cup C^*_k)-T~does~not~contain~D^*-T~for ~any~D^* \in \mathcal{C^*}~and~T\subset D^*\}$. 
		\item	$\mathcal{Q}_5^*=\{T\cup\{a\} \}$.
	\end{enumerate}
\end{lemma}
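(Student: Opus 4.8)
The plan is to pass to the dual matroid, realize the cocircuits of $M'_T$ as the circuits of a binary single-element \emph{extension}, and then read off the five families from the known circuit structure of such extensions.

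\textbf{Step 1 (Dualize).} Since $M'_T/a=M$, taking duals gives $(M'_T)^*\backslash a=M^*$; hence $(M'_T)^*$ is a single-element extension of $N:=M^*$ by $a$, and the cocircuits of $M'_T$ are exactly the circuits of $(M'_T)^*$. I first identify the adjoined column. Writing a standard representation $A=[\,I_r\mid D\,]$ of $M$, the matrix $A'_T$ of Definition \ref{defel} becomes, after row-reducing on the basis $\{1,\dots,r\}\cup\{a\}$,
\[
\begin{pmatrix} I_r & D & 0\\ 0 & pD+q & 1\end{pmatrix},
\]
where $p,q$ are the restrictions of the characteristic row vector of $T$ to the two column blocks. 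Dualizing this standard form shows that $(M'_T)^*$ is represented by the standard matrix $[\,D^{T}\mid I_{n-r}\,]$ of $M^*$ with one extra column, labelled $a$, equal to $(pD+q)^{T}=\sum_{t\in T}v_t$, the $GF(2)$-sum of the columns of $M^*$ indexed by $T$. Thus $a$ is adjoined to $M^*$ as the mod-$2$ sum of the elements of $T$.

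\textbf{Step 2 (Circuits of the extension).} For a binary single-element extension the circuits are of two kinds: the circuits of $N=M^*$ avoiding $a$, and the sets $\{a\}\cup S$ with $S\subseteq E(M)$ inclusion-minimal subject to $\sum_{s\in S}v_s=v_a=\sum_{t\in T}v_t$. The first kind is exactly the collection of cocircuits of $M$, giving $\mathcal{Q}_2^*$. For the second kind, over $GF(2)$ the relation $\sum_{s\in S}v_s=\sum_{t\in T}v_t$ is equivalent to $\sum_{e\in S\Delta T}v_e=0$; that is, $S\Delta T$ is a disjoint union of cocircuits of $M$ (a member of the cocycle space of $M$). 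So the cocircuits of $M'_T$ containing $a$ are precisely the sets $\{a\}\cup S$ for which $S\Delta T$ is a cocycle of $M$ and $S$ is minimal with this property.

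\textbf{Step 3 (Case analysis).} Writing $Z:=S\Delta T=C^*_1\sqcup\cdots\sqcup C^*_k$ as a disjoint union of cocircuits (so $S=T\Delta Z$), I split on $k$ and on whether $T\subseteq Z$. Minimality forces every block to meet $T$ (a block disjoint from $T$ could be removed to shrink $S$) and forbids $S$ from containing the $T$-deletion of a larger cocircuit, which produces the ``does not contain $D^*-T$'' side conditions. Then: $k=0$ gives $S=T$, and the hypothesis that $T$ contains no cocircuit of $M$ makes $T$ minimal, yielding $\mathcal{Q}_5^*$; $k=1$ with $T\subsetneq C^*$ gives $S=C^*-T$, i.e.\ $\mathcal{Q}_1^*$; $k=1$ with $1\le|C^*\cap T|<|T|$ gives $S=C^*\Delta T$, i.e.\ $\mathcal{Q}_3^*$; and $k\ge 2$ with $T\subseteq\bigcup_i C^*_i$ gives $S=\bigl(\bigcup_i C^*_i\bigr)-T$, i.e.\ $\mathcal{Q}_4^*$.

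\textbf{Obstacle.} The delicate part is the minimality bookkeeping in Step 3: translating ``$S$ is inclusion-minimal with $S\Delta T$ a cocycle'' into the explicit side conditions of $\mathcal{Q}_1^*,\mathcal{Q}_3^*,\mathcal{Q}_4^*$, and, crucially, proving the five families are exhaustive. In particular one must verify that a configuration with $k\ge 2$ and $T\not\subseteq\bigcup_i C^*_i$ either violates minimality or collapses onto one of the listed shapes, and that the hypothesis ``$T$ contains no cocircuit of $M$'' (already needed for $\mathcal{Q}_5^*$) eliminates the degenerate overlaps between families. I expect this exhaustiveness-and-minimality analysis, rather than the duality reduction of Steps 1--2, to be the main work.
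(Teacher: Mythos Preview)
The paper does not prove this lemma: it is quoted from \cite{azanchiler} and used as a black box in the proof of Proposition~\ref{c5pro}. There is therefore no proof in the present paper against which to compare your attempt.

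That said, your approach is the natural one and Steps~1--2 are correct. Dualizing $M'_T/a=M$ exhibits $(M'_T)^*$ as the binary single-element extension of $M^*$ by the column $\sum_{t\in T}v_t$, and the circuits through $a$ are exactly the sets $\{a\}\cup S$ with $S$ inclusion-minimal subject to $S\Delta T$ lying in the cocycle space of $M$. (Coindependence of such an $S$ in $M$ is automatic: if a cocircuit $C^*\subseteq S$, then $(S\setminus C^*)\Delta T=(S\Delta T)\Delta C^*$ is again a cocycle, contradicting minimality.) Your self-identified obstacle is the genuine content of the lemma: when $Z=S\Delta T$ decomposes as $C_1^*\sqcup\cdots\sqcup C_k^*$ with $k\ge 2$ but $T\not\subseteq Z$, the set $S=(Z\setminus T)\cup(T\setminus Z)$ does not have the shape required by $\mathcal{Q}_4^*$, and one must argue either that minimality excludes this configuration or that $Z$ admits an alternative cocircuit decomposition placing $S$ in one of the listed families. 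Carrying out that exhaustiveness argument carefully (or consulting the cited source) is what remains to finish your proof.
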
 
\section{Proofs} 
In this section, we prove Main Theorem \ref{maint}  and also provide an alternate shorter proof of Theorem \ref{c5azadith}.

We need the following result.
\begin{lemma} [\cite{oxley2006matroid}, pp 296]\label{11}
	If $n \geq 2$ and $M$ is an $n$-connected matroid with $|E(M)| \geq 2(n-1),$ then all circuits and all cocircuits of $M$ have at least $n$ elements. 
\end{lemma}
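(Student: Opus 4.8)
The plan is to argue directly from the definition of Tutte connectivity: recall that $M$ fails to be $n$-connected exactly when it admits a $k$-separation for some $k$ with $1 \le k < n$, i.e.\ a partition $(X,Y)$ of $E(M)$ with $|X| \ge k$, $|Y| \ge k$, and $r(X) + r(Y) - r(M) \le k-1$. I would first prove the statement for circuits and then deduce it for cocircuits by passing to the dual matroid $M^{*}$.

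For the circuit case, suppose toward a contradiction that $M$ has a circuit $C$ with $m := |C| < n$. The natural candidate separation is $(X,Y) = (C,\, E(M)\setminus C)$. Since a circuit is a minimal dependent set, its rank is one less than its size, so $r(X) = m-1$; and since $Y \subseteq E(M)$, we have $r(Y) \le r(M)$. Hence $r(X) + r(Y) - r(M) \le m-1$. It remains to check the cardinality conditions for an $m$-separation: $|X| = m$, while $m \le n-1$ together with $|E(M)| \ge 2(n-1)$ gives $|Y| = |E(M)| - m \ge 2(n-1) - (n-1) = n-1 \ge m$. Thus $(C,\, E(M)\setminus C)$ is an $m$-separation of $M$ with $m < n$, contradicting $n$-connectivity. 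Therefore every circuit of $M$ has at least $n$ elements.

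For cocircuits, I would use that $M^{*}$ is $n$-connected, that $|E(M^{*})| = |E(M)| \ge 2(n-1)$, and that the cocircuits of $M$ are precisely the circuits of $M^{*}$. Applying the circuit case just proved to $M^{*}$ shows that every cocircuit of $M$ has at least $n$ elements, completing the proof.

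I do not anticipate a genuine obstacle here: the whole argument is a one-line rank computation followed by a cardinality check, and it is a standard fact (hence the citation). The step that deserves the most attention is verifying that the partition $(C,\, E(M)\setminus C)$ actually meets the size requirements in the definition of a $k$-separation; this is precisely where the hypothesis $|E(M)| \ge 2(n-1)$ is needed, and the extremal case $|E(M)| = 2(n-1)$ with $|C| = n-1$ shows it cannot be relaxed.
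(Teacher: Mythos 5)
Your proof is correct and is essentially the standard argument from Oxley that the paper simply cites without reproducing: the partition $(C, E(M)\setminus C)$ gives an $|C|$-separation when $|C|<n$, and the cocircuit case follows by duality since Tutte connectivity is self-dual. No issues.
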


 Suppose $M$ is an $n$-connected binary matroid with $|E(M)| \geq 2(n-1)$ and $T \subset E(M).$  By Definition \ref{defel},  there is a cocircuit of $M_T'$ contained in $T \cup \{a\}.$  Therefore, if $|T|< n-1,$ then $M_T'$ contains a cocircuit of size less than $n$ by Lemma \ref{c5cces} and hence $M_T'$ is not $ n$-connected by Lemma \ref{11}. Hence we assume that $|T| \geq n-1.$

  We obtain below an obvious necessary condition for $M_T'$ to be $n$-connected. 
 \begin{lemma}\label{c7ifpartels}
	Let $n\geq 2$ be an integer and $M$ be an $n$-connected binary matroid with $|E(M)| \geq 2n-2$. Suppose $T \subset E(M)$ with $|T|=n-1$. If $M_T'$ is $n$-connected, then  $|Q| \geq 2|Q\cap T|$ for every cocircuit $Q$ of $M$ intersecting $T$.
	\end{lemma}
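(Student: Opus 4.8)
The plan is to read off, from the classification of cocircuits of $M_T'$ in Lemma~\ref{c5cces}, a cocircuit of $M_T'$ built directly from the given cocircuit $Q$ of $M$, and then to apply Lemma~\ref{11} to $M_T'$. First I would record two preliminary facts: since $|E(M_T')| = |E(M)| + 1 \geq 2n-1 \geq 2(n-1)$ and $M_T'$ is $n$-connected, every cocircuit of $M_T'$ has at least $n$ elements; and since $|E(M)| \geq 2(n-1)$, Lemma~\ref{11} gives that every cocircuit of $M$ has at least $n$ elements, so $|T| = n-1$ is too small to contain a cocircuit of $M$ and Lemma~\ref{c5cces} is applicable.

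Next, fix a cocircuit $Q$ of $M$ with $Q \cap T \neq \emptyset$ and set $t = |Q \cap T|$, so $1 \leq t \leq n-1$. If $t = n-1$, then $T \subseteq Q$, and since $|Q| \geq n > n-1 = |T|$ the containment is proper; hence $(Q-T)\cup\{a\}$ is a cocircuit of $M_T'$ of class $\mathcal{Q}_1^*$, so $|Q| - (n-1) + 1 \geq n$, which rearranges to $|Q| \geq 2n-2 = 2t$. If instead $1 \leq t \leq n-2$, then $Q \cap T$ is a nonempty proper subset of $T$, and I would split according to the side condition in the definition of $\mathcal{Q}_3^*$. When $Q$ contains no set of the form $D^*-T$ with $D^*$ a cocircuit of $M$ satisfying $T \subsetneq D^*$, the set $(Q\, \Delta\, T)\cup\{a\}$ is a cocircuit of $M_T'$ of class $\mathcal{Q}_3^*$, and from $|Q| + |T| - 2t + 1 \geq n$ together with $|T| = n-1$ one gets $|Q| \geq 2t$ at once.

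The remaining case is the only one requiring an extra idea: $Q \supseteq D^*-T$ for some cocircuit $D^*$ of $M$ with $T \subsetneq D^*$. Here I would observe that $D^* \cap T = T$, so $D^* = (D^*-T)\cup T \subseteq Q \cup T$, giving $|D^*| \leq |Q| + (n-1) - t$; and, applying the argument of the case $t = n-1$ to $D^*$ in place of $Q$, that $|D^*| \geq 2n-2$. Combining the two bounds yields $|Q| \geq (n-1) + t$, and since $t \leq n-1$ this gives $|Q| \geq 2t = 2|Q \cap T|$, as desired. I expect this last step — circumventing the exceptional side condition in $\mathcal{Q}_3^*$ by passing to the containing cocircuit $D^*$ and exploiting $D^* \subseteq Q \cup T$ — to be the main (indeed essentially the only) non-mechanical point; everything else is routine bookkeeping with Lemmas~\ref{11} and~\ref{c5cces}.
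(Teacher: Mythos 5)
Your proof is correct, but it takes a more laborious route than the paper's. The paper disposes of the lemma in three lines: assuming $|Q|<2|Q\cap T|$, it observes that $(Q\,\Delta\,T)\cup\{a\}$ \emph{contains} a cocircuit $X$ of $M_T'$ (this is just the binary-matroid fact that the symmetric difference of the cocircuits $Q$ and $T\cup\{a\}$ is a nonempty disjoint union of cocircuits), and then $|X|\leq |Q|+|T|-2|Q\cap T|+1<|T|+1=n$ contradicts Lemma~\ref{11}. Because only containment of a cocircuit is needed, no case distinction among $\mathcal{Q}_1^*$, $\mathcal{Q}_3^*$ and the side condition of $\mathcal{Q}_3^*$ ever arises. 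Your argument instead uses the classification as an exact description — asserting that the displayed members of $\mathcal{Q}_1^*$ and $\mathcal{Q}_3^*$ \emph{are} cocircuits of $M_T'$ — which is true but is strictly more than Lemma~\ref{c5cces} as stated gives you (it only says every cocircuit lies in one of the classes, not conversely), and it forces you to invent the extra step for the exceptional case, passing to the containing cocircuit $D^*\supsetneq T$ and combining $|D^*|\geq 2n-2$ with $D^*\subseteq Q\cup T$. That step is correct ($|Q|\geq (n-1)+t\geq 2t$), and it is a nice observation, but it is work that the ``contains a cocircuit'' formulation renders unnecessary. In short: same skeleton (bound the size of a cocircuit of $M_T'$ manufactured from $Q$ and invoke Lemma~\ref{11}), but your reliance on the exact cocircuit classification buys you nothing and costs you a case analysis plus an unproved converse of Lemma~\ref{c5cces}; the symmetric-difference argument is both shorter and needs weaker input.
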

\begin{proof}
Suppose $M_T'$ is $n$-connected.  Assume that there is a cocircuit $Q$ of $M$ intersecting $T$ such that  $|Q| < 2|Q\cap T|.$  By Lemma \ref{c5cces} (iii),  $Q \Delta T\cup \{a\}$ contains a  cocircuit, say $X$, of $M'_T$.   Then
$|X| \leq |Q\Delta T\cup \{a\}|=|Q|+|T|-2|Q\cap T|+1 < |T|+1 = n,$
 a contradiction by Lemma \ref{11}.  \end{proof}

We now prove that the obvious necessary condition for $M_T'$ to be $n$-connected stated in the above lemma is sufficient also. 
\begin{proposition}  \label{c5pro}
			Let $n\geq 2$ be an integer and $M$ be an $n$-connected binary matroid with $|E(M)| \geq 2n-2$. Suppose $T \subset E(M)$ with $|T|=n-1$. If $|Q|\geq 2|Q\cap T|$ for every cocircuit $Q$ of $M$ intersecting $T$,   then $M'_T$ is $n$-connected.
\end{proposition}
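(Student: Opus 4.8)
The plan is to argue by contradiction, by ruling out $k$-separations of $M'_T$ for $k\le n-1$. First I dispose of $k=1$: since $M'_T/a=M$ is connected and, by Lemma~\ref{elesingle}, $a$ is neither a loop nor a coloop of $M'_T$, the matroid $M'_T$ is connected. So suppose $M'_T$ has a $k$-separation $(X,Y)$ with $k$ chosen as small as possible, $2\le k\le n-1$, and label the sides so that $a\in X$; put $S=X\setminus\{a\}$ and let $r,r'$ denote the rank functions of $M$ and $M'_T$ as in Lemma~\ref{c5res}. By Lemma~\ref{c5res}(i), $r'(X)=r(S)+1$; by Corollary~\ref{c5rmt}, $r'(M'_T)=r(M)+1$; and for the side $Y$ not containing $a$, Lemma~\ref{c5res}(ii),(iii) gives $r'(Y)=r(Y)+1$ if $Y$ contains a circuit $C$ of $M$ with $|C\cap T|$ odd and $r'(Y)=r(Y)$ otherwise. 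I would then compute the connectivity $r'(X)+r'(Y)-r'(M'_T)$ of $(X,Y)$ in each of the resulting four cases.

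If $|X|\ge k+1$, or if $|X|=k$ but $Y$ contains such a circuit, then in each case the partition $(S,Y)$ of $E(M)$ satisfies $r(S)+r(Y)-r(M)\le j-1$ for some integer $j$ with $1\le j\le k-1<n$, while $|S|\ge j$ and $|Y|\ge j$ (here $|E(M)|\ge 2n-2$ is used to keep $|Y|$ large); thus $(S,Y)$ is a $j$-separation of $M$, contradicting the $n$-connectivity of $M$. The only surviving possibility is that $|X|=k$ and $Y=E(M)\setminus S$ contains no circuit $C$ of $M$ with $|C\cap T|$ odd. Then $r'(Y)=r(Y)$, and the connectivity inequality for $(X,Y)$ reduces to $r(S)+r(Y)-r(M)\le|S|=k-1$; as this quantity is always at most $|S|<n$ and $M$ is $n$-connected, it must in fact equal $|S|$, and this forces $r(S)=|S|$ and $r(Y)=r(M)$, i.e.\ $S$ is both independent and coindependent in $M$.

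The remaining work is to contradict this configuration, and this is the step that uses the cocircuit hypothesis; I expect it to be the main obstacle. Since $|S|=k-1<n-1=|T|$, we have $T\cap Y\ne\emptyset$. Because no circuit of $M$ contained in $Y$ meets $T$ in an odd number of elements, $T\cap Y$ is a cocycle of the binary matroid $M\backslash S=M|Y$, hence $T\cap Y=\mathrm{supp}(w)\cap Y$ for some cocycle $w$ of $M$. As $S$ is coindependent, $M$ has no nonzero cocycle supported inside $S$, so $w$ is uniquely determined, say with $\mathrm{supp}(w)=(T\cap Y)\cup R$ where $R\subseteq S$. Write $\mathrm{supp}(w)=D_1^{*}\cup\cdots\cup D_p^{*}$ as a disjoint union of cocircuits of $M$ (a standard fact for binary matroids). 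Each $D_i^{*}$ lies in $T\cup S$ and, being a cocircuit, cannot be contained in the coindependent set $S$, so it meets $T$; hence $|D_i^{*}|\ge 2|D_i^{*}\cap T|$ by hypothesis. Splitting $D_i^{*}$ along the partition $(T\cap Y,R)$ of $\mathrm{supp}(w)$ turns this inequality into $|D_i^{*}\cap(R\setminus T)|\ge|D_i^{*}\cap(T\cap Y)|+|D_i^{*}\cap(R\cap T)|$; summing over $i$ and using that the $D_i^{*}$ partition $\mathrm{supp}(w)$ gives $|R\setminus T|\ge|T\cap Y|+|R\cap T|$. Finally $R\setminus T\subseteq S\setminus T$ and $|T\cap Y|=|T|-|S\cap T|=(n-1)-|S\cap T|$, so
\[
(k-1)-|S\cap T|=|S\setminus T|\ \ge\ |R\setminus T|\ \ge\ (n-1)-|S\cap T|+|R\cap T|,
\]
which forces $k\ge n$, contradicting $k\le n-1$. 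Hence $M'_T$ has no $k$-separation with $k\le n-1$, i.e.\ $M'_T$ is $n$-connected. The delicate point is the hard case: recognising $T\cap Y$ as a cocycle of $M\backslash S$, passing to the unique cocycle $w$ of $M$ and its decomposition into cocircuits, and organising the count so that the inequalities $|Q|\ge 2|Q\cap T|$ exactly absorb the $|S\cap T|$ terms and yield $k\ge n$.
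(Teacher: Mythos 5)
Your proof is correct, and its decisive step is genuinely different from the paper's. Both arguments share the outer skeleton (assume a low-order separation of $M'_T$, use Lemma~\ref{c5res} and Corollary~\ref{c5rmt} to push the separation down to $M$ unless one side is very small), but they diverge on the small side. The paper takes the small side $A$ with $|A|=n-1$, shows it is independent via Lemma~\ref{11}, and then shows it is \emph{coindependent in $M'_T$} by running through the five classes of cocircuits of $M'_T$ from Lemma~\ref{c5cces}, applying the hypothesis $|Q|\ge 2|Q\cap T|$ to the classes $\mathcal{Q}_1^*,\mathcal{Q}_3^*,\mathcal{Q}_4^*$. You instead never leave $M$: you isolate the configuration ``$S$ independent and coindependent in $M$, $|S|=k-1\le n-2$, and $E(M)\setminus S$ supports no circuit meeting $T$ oddly,'' recognise $T\cap Y$ as a cocycle of $M|Y$, lift it to a cocycle $w$ of $M$, decompose $w$ into disjoint cocircuits $D_i^*$, and apply the hypothesis to each $D_i^*$ to get the counting contradiction $k\ge n$. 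In effect you re-derive exactly the fragment of the cocircuit classification that the paper imports from Lemma~\ref{c5cces}, so your proof is self-contained modulo standard facts about binary cycle/cocycle spaces; you also treat the separation more carefully (a minimal $k$-separation rather than the paper's jump to ``an $(n-1)$-separation,'' whose size condition $\min\{|A|,|B|\}\ge n-1$ does not literally follow from non-$n$-connectedness). Two small blemishes, neither fatal: in the subcase $|X|\ge k+1$ with $Y$ containing no odd circuit you only get $r(S)+r(Y)-r(M)\le k-1$, i.e.\ a $j$-separation with $j=k$, not $j\le k-1$ as written (still $j\le n-1$, so the contradiction stands); and you never actually need $|E(M)|\ge 2n-2$ or Lemma~\ref{11}, so the parenthetical about keeping $|Y|$ large is vestigial.
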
 
\begin{proof}
	Assume that $|Q|\geq 2|Q\cap T|$ for every cocircuit $Q$ of $M$ intersecting $T$. We proceed by contradiction. Suppose  $M'_T$ is not $n$-connected. Then there exists an $(n-1)$-separation $(A,B)$ of $M'_T.$ Therefore
	\begin{center} 	 min$\{|A|,|B|\} \geq n-1$ and 
		$r'(A)+r'(B)-r'(M'_T) \leq n-2.$ \ldots \ldots $(*)$
\end{center}

Suppose  $|A|\geq n$ and $|B| \geq n.$  Without loss of generality, we may assume that $a\in B$.  By Lemma \ref{c5res} and by $(*)$, 
$$r(A)+r(B-\{a\})-r(M) \leq r'(A)+r'(B)-1-(r'(M'_T)-1)\leq n-2.$$  	Therefore $(A, B-\{a\})$ forms an $(n-1)$-separation of $M$, a contradiction.

Therefore $|A|=n-1$ or $|B|=n-1.$  We may assume that $|A|=n-1.$   Then $A$ is independent in $M$ by Lemma \ref{11}.  Hence, by Lemma \ref{c5cesp},  $A$ is independent in $M'_T$ also.  

\vskip.2cm \noindent
\textit{Claim:  $A$ is a coindependent in $M'_T$.}
\vskip.2cm \noindent
Assume that $A$ is not coindependent in $M'_T$.  Then $A$ contains some cocircuit $Q$ of $M'_T$.  Therefore  $|Q| \leq |A|=n-1.$ By  Lemma \ref{11}, $Q$ is not a cocircuit of $M.$ Further, by  Lemma \ref{c5cces},  $Q$ does not belong to $\mathcal{Q}_2^*$. Hence $Q$    belongs to one of the four classes $\mathcal{Q}_1^*$, $\mathcal{Q}_3^*$, $\mathcal{Q}_4^*$ and $\mathcal{Q}_5^*.$

(1). Suppose $Q \in\mathcal{Q}_1^* .$  Then $Q = (C^* - T)\cup\{a\},$ where $C^*$ is a cocircuit of $M$ containing $T.$ Then,  by hypothesis, $|C^*| \geq 2|C^* \cap T|=2|T|=2n-2$. Therefore 
$$n- 1\geq |Q|=|C^*|-|T|+1\geq (2n-2)-(n-1)+1=n,$$ a contradiction.

(2). Suppose  $Q \in\mathcal{Q}_4^* .$  Then $Q =((C^*_1 \cup C^*_2 \cup \dots \cup C^*_k)-T)\cup\{a\},$  where $ k \geq 2$ and $C^*_i$ are mutually disjoint cocircuits of $M$ and each of them contains at least one element of  $T.$  Since $M$ is $n$-connected, $|C^*_i| \geq n$ for each $i$ by Lemma \ref{11}. Hence, we have 
$$|Q| \geq |(C^*_1 \cup C^*_2)- T | +1\geq |C^*_1| + |C^*_2| - |T| +1 \geq 2n -( n-1)+1 = n + 2 > n-1 \geq |Q|, $$
  again a contradiction.

(3). Suppose   $Q \in\mathcal{Q}_3^* .$  Then  $ Q = (C^* \Delta T)\cup\{a\},$ where $C^*$ is a cocircuit of $M$ intersecting $T.$ Hence 
$$ |Q| = |C^* \Delta T | +1= |C^*| + |T| - 2|C^*\cap T|+1 \geq |T|+1 = n > n-1 \geq |Q|,$$
 a contradiction.

(4). Suppose $ Q \in \mathcal{Q}_5^* $. So $Q=T\cup\{a\}.$ This gives $|Q|=n,$ a contradiction.

Thus in all the four cases, we get a contradiction.  This proves the claim.

Therefore  $A$ is independent and coindependent in the matroid $M'_T.$  Hence $r'(A)=|A|$ and $r'(B) = r'(M'_T).$  This gives $n-1=|A|=r'(A)=r'(A)+r'(B)-r'(M'_T) \leq n-2, $	
a contradiction.  Thus we get a contradiction in each case.  Therefore $M'_T$ is  $n$-connected. \end{proof}

Main Theorem \ref{maint} follows obviously from  Lemma \ref{c7ifpartels} and Proposition \ref{c5pro}.

For $2\leq n \leq 4$, we get the following weaker sufficient conditions for $M_T'$ to be $n$-connected. 
\begin{corollary}\label{c7cornes}
Let $n\in \{2,3,4\}$ and let $M$ be $n$-connected binary matroid.  Suppose $T\subset E(M)$ with $|T|=n-1$. If $|Q| \geq 2n-2$ for every  cocircuit $Q$ containing $T$,  then $M'_T$ is $n$-connected. 
\end{corollary}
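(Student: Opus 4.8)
The plan is to deduce Corollary~\ref{c7cornes} directly from Main Theorem~\ref{maint}, so the only thing to verify is that the hypothesis ``$|Q|\geq 2n-2$ for every cocircuit $Q$ of $M$ containing $T$'' already implies the cocircuit condition ``$|Q|\geq 2|Q\cap T|$ for every cocircuit $Q$ of $M$ meeting $T$'' appearing in the Main Theorem. To this end I would fix a cocircuit $Q$ of $M$ with $Q\cap T\neq\emptyset$ and set $k:=|Q\cap T|$, so that $1\leq k\leq |T|=n-1$, and then split into two cases according to the value of $k$.

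If $k=n-1$, then $T\subseteq Q$, so $Q$ is one of the cocircuits to which the hypothesis applies, and we get $|Q|\geq 2n-2=2(n-1)=2k$, as wanted. If instead $k\leq n-2$, I would invoke Lemma~\ref{11}: since $M$ is $n$-connected (and $|E(M)|\geq 2n-2$, which may be assumed, since otherwise $M'_T$ has a cocircuit shorter than $n$ and the statement reduces to a degenerate case), every cocircuit of $M$ has at least $n$ elements, hence $|Q|\geq n$. The restriction $n\in\{2,3,4\}$ enters exactly here: it gives $2k\leq 2(n-2)=2n-4\leq n$, so $|Q|\geq n\geq 2k$. Combining the two cases, the hypothesis of Main Theorem~\ref{maint} holds and therefore $M'_T$ is $n$-connected.

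There is no genuinely hard step in this argument; it is a short arithmetic reduction to the Main Theorem. The only points requiring a little care are: the inequality $2(n-2)\leq n$ is tight at $n=4$ and fails for $n\geq 5$, which is precisely why the corollary is stated only for $n\leq 4$; the case $n=2$ is vacuous in the sense that $k=n-1$ is then the only possibility; and one should make explicit the ground-set size needed to apply Lemma~\ref{11}.
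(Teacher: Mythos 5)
Your proposal is correct and follows essentially the same route as the paper: reduce to the Main Theorem (equivalently, Proposition \ref{c5pro}) and verify the cocircuit condition by splitting into the cases $T\subseteq Q$ and $T\nsubseteq Q$, using the hypothesis in the first case and Lemma \ref{11} together with $2(n-2)\leq n$ in the second. Your extra remark about needing $|E(M)|\geq 2n-2$ to invoke Lemma \ref{11} is a point the paper's own proof passes over silently.
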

\begin{proof}
Let $Q$ be a cocircuit of $M$ intersecting $T$. By Proposition \ref{c5pro}, it is sufficient to prove that $|Q|\geq 2|Q\cap T|$. If $T \subseteq Q$, then $|Q|\geq 2n-2=2|T|=2|Q\cap T|$. Suppose $T \nsubseteq Q$. Then $|Q\cap T|<|T|=n-1$ and hence $|Q\cap T|\leq n-2$.  Since $ 2\leq n \leq 4$, we have $2|Q\cap T| \leq 2(n-2)=2n-4 \leq n.$ By Lemma \ref{11}, $|Q| \geq n$ and so $|Q| \geq 2|Q\cap T|$. 
\end{proof}

We combine Main Theorem \ref{maint}  and Theorem \ref{c5azadith} and provide a shorter proof of Theorem \ref{c5azadith}. 

 \begin{theorem}
 		Let $n\geq 2$ be an integer and $M$ be an $n$-connected binary matroid with $|E(M)| \geq 2n-2$. Suppose $T \subset E(M)$ with $|T|=n-1$. Then the following statements are equivalent. 
 
 	\begin{enumerate}[(i).]
 		\item $M'_T$ is $n$-connected.
 		\item $|Q|\geq 2|Q\cap T|$ for every cocircuit $Q$ of $M$ intersecting $T.$  
 		\item For any subset $A \subset E(M)$ with $|A|=n-2$, there exists a circuit $C$ of $M$ containing an odd number of elements of $T$ and is contained in $E(M)-A.$
 	\end{enumerate}
 \end{theorem}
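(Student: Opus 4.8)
The plan is to observe that the equivalence of (i) and (ii) is precisely Main Theorem \ref{maint}, so that it only remains to prove $(ii)\Leftrightarrow(iii)$ by a direct combinatorial argument; chaining these gives all three equivalences, and in particular re-derives Theorem \ref{c5azadith} (the equivalence of (i) and (iii)) without its original proof, which is where the promised shortening comes from. I would prove $(ii)\Leftrightarrow(iii)$ in contrapositive form in both directions, using only standard properties of a binary matroid: a circuit and a cocircuit meet in an even number of elements; every non-empty member of the cocircuit space is a disjoint union of cocircuits; and every cocircuit of a deletion $M\setminus A$ has the form $Q\setminus A$ for some cocircuit $Q$ of $M$ (a standard description of the circuits of a contraction, applied to $M^*$).

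For $(iii)\Rightarrow(ii)$ I would prove $\lnot(ii)\Rightarrow\lnot(iii)$. Suppose $Q$ is a cocircuit of $M$ with $Q\cap T\neq\emptyset$ and $|Q|<2|Q\cap T|$, so $|Q\setminus T|\le|Q\cap T|-1$. Put $A_0=(T\setminus Q)\cup(Q\setminus T)$; then $|A_0|=|T\setminus Q|+|Q\setminus T|\le(n-1-|Q\cap T|)+(|Q\cap T|-1)=n-2$, so $A_0$ extends to a set $A$ with $|A|=n-2$ inside $E(M)$ (enlarging $A$ only discards circuits, so it preserves the desired property). Any circuit $C$ of $M$ disjoint from $A$ avoids $T\setminus Q$ and $Q\setminus T$, which forces $C\cap T=C\cap Q$; since $M$ is binary this set has even cardinality, so no circuit of $M$ disjoint from $A$ meets $T$ in an odd number of elements. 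This is exactly $\lnot(iii)$.

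For $(ii)\Rightarrow(iii)$ I would prove $\lnot(iii)\Rightarrow\lnot(ii)$. Assume $A$ with $|A|=n-2$ is such that every circuit of $M$ disjoint from $A$ meets $T$ evenly; equivalently, every circuit of $N=M\setminus A$ meets $T\setminus A$ evenly. Since $N$ is binary, $T\setminus A$ then lies in the cocircuit space of $N$, and being non-empty (as $|T|=n-1>n-2=|A|$) it is a disjoint union $T\setminus A=D_1\sqcup\cdots\sqcup D_m$ of cocircuits of $N$. Writing each $D_j=Q_j\setminus A$ for a cocircuit $Q_j$ of $M$ and setting $Z=Q_1\triangle\cdots\triangle Q_m$, one gets a non-empty member of the cocircuit space of $M$ with $Z\setminus A=D_1\triangle\cdots\triangle D_m=T\setminus A$ (the $D_j$ being pairwise disjoint), so $Z$ and $T$ agree off $A$, i.e. $Z\triangle T\subseteq A$. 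Decompose $Z$ into pairwise disjoint cocircuits $Z=P_1\sqcup\cdots\sqcup P_k$ of $M$. Then a short count gives
\[
\sum_{i=1}^{k}\bigl(|P_i|-2|P_i\cap T|\bigr)=|Z|-2|Z\cap T|=|Z\setminus T|-|Z\cap T|\le|A\setminus T|-|T\setminus A|=|A|-|T|=-1,
\]
using $Z\setminus T\subseteq A\setminus T$ and $T\setminus A\subseteq Z\cap T$. Hence some $P_i$ satisfies $|P_i|<2|P_i\cap T|$, which in particular forces $P_i\cap T\neq\emptyset$; this $P_i$ is a cocircuit of $M$ violating (ii).

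I expect the crux to be the middle of the $(ii)\Rightarrow(iii)$ argument: one must turn the "evenness" hypothesis on $A$ into a \emph{single} cocircuit of $M$ violating (ii). The natural object is the cocycle $T\setminus A$ of $M\setminus A$, but it first has to be lifted to a cocycle $Z$ of $M$ that agrees with $T$ off $A$ (via "cocircuits of $M\setminus A$ are restrictions of cocircuits of $M$"), and then re-expressed as a \emph{disjoint} union of cocircuits of $M$; only then is the additivity $|Z|-2|Z\cap T|=\sum_i(|P_i|-2|P_i\cap T|)$ available, and it is this additivity together with $|A|=|T|-1$ that produces the required cocircuit. Everything else is elementary: circuit–cocircuit orthogonality in binary matroids, the splitting of a non-empty cocycle into disjoint cocircuits, and cardinality bookkeeping among $T\cap A$, $T\setminus A$ and $Z\setminus T$.
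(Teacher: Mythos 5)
Your proposal is correct, but it routes the third condition differently from the paper. Both arguments dispose of (i)$\Leftrightarrow$(ii) by citing the Main Theorem; the difference is in how (iii) is attached. The paper proves (i)$\Leftrightarrow$(iii) directly, working inside $M'_T$: it uses the rank formula for $M'_T$ (Lemma \ref{c5res}) and Corollary \ref{c5rmt} to show that a violating set $A$ forces $r(M)=r'(M'_T)$, and conversely that an $(n-1)$-separation of $M'_T$ descends to a separation of $M$ once (iii) guarantees $r'(B)=r(B)+1$. You instead prove (ii)$\Leftrightarrow$(iii) entirely inside $M$, never touching the coextension: circuit--cocircuit orthogonality gives $\lnot(\mathrm{ii})\Rightarrow\lnot(\mathrm{iii})$ via the blocking set $(T\setminus Q)\cup(Q\setminus T)$, and for the converse you recognize $T\setminus A$ as a member of the cocycle space of $M\setminus A$, lift it to a cocycle $Z$ of $M$ with $Z\bigtriangleup T\subseteq A$, split $Z$ into disjoint cocircuits, and let the count $|Z|-2|Z\cap T|\le|A|-|T|=-1$ produce a violating cocircuit. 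I checked the details (the equivalence of evenness with membership in the cocycle space, the lifting of cocircuits of $M\setminus A$, the disjoint-union decomposition, and the final inequality) and they are all sound. What the paper's route buys is brevity given the preliminary lemmas already in place and a direct re-proof of Azadi's criterion from connectivity; what yours buys is a purely combinatorial explanation, independent of $M'_T$, of why conditions (ii) and (iii) are the same condition on $M$ in disguise --- arguably more illuminating, at the cost of invoking standard binary-matroid duality facts not otherwise used in the paper.
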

\begin{proof} (i) $\implies$ (ii) follows from Lemma \ref{c7ifpartels} and (ii) $\implies$ (i) follows from Proposition \ref{c5pro}.

(i) $\implies$ (iii).  Suppose (i) holds  but (iii) does not hold.    Then there is a subset $A$ of $E(M)$ with $|A|=n-2$ such that  no circuit of $M$ containing an odd number of elements of $T$ is contained in $E(M)-A.$  	Let $A'=A \cup \{a\}$ and $B=E(M)-A.$  Then $|A'|= n-1$ and $|B|\geq n-1.$ Let $r$ and $r'$ be the rank function of $M$ and  $M_T'$, respectively. By Lemma \ref{11},  $A$ contains neither a cocircuit of $M$ nor a cocircuit of $M'_T.$ Hence $r(B)=r(M)$ and $r'(B)=r'(M_T')$.   Also, by Lemma \ref{c5res}(iii), $r(B)=r'(B).$ This gives $r(M)=r'(M_T')$, a contradiction by Corollary \ref{c5rmt}. Hence (i) implies (iii).

	(iii) $\implies$ (i). 	Suppose (iii) holds  but (i) does not hold. Then $M_T'$ has an $(n-1)$-separation $(A,B).$    Therefore
	\begin{center} 
	 min$\{|A|,|B|\} \geq n-1$ and	  $r'(A)-r'(B)-r'(M'_T)\leq n-2$. \ldots \ldots $(*)$
	  \end{center} 
  
	Without loss of generality, assume that $a\in A.$  By Lemma \ref{c5res}(i), $r'(A)=r(A-\{a\})+1$.  If $|A|\geq n$,  then, by $(*)$, 
	$$r(A-\{a\})+r(B)-r(M) \leq r'(A)-1+r'(B)-(r'(M'_T)-1)\leq n-2.$$
	  Therefore $(A-\{a\}, B)$ is an $(n-1)$-separation of $M$, a contradiction.
	Hence  $|A|=n-1$. Then $|A-\{a\}|=n-2$.  By (iii) and Lemma \ref{c5res}(ii),  $r'(B)=r(B)+1$. 	Therefore 	
	$$r(A-\{a\})+r(B)-r(M)\leq r'(A)-1+r'(B)-1-(r'(M'_T)-1)=n-3.$$
	  This shows that  $(A,B)$ is an $(n-2)$-separation of $M,$ a contradiction. Thus (iii) implies (i). \end{proof}

\section{Consequences to Graphs}
In this section, we prove that Proposition \ref{c5pro} is a matroid extension of  Theorem \ref{c5slater}. 
 
We need the following result. 
\begin{theorem} [\cite{oxley2006matroid}, pp. 328] \label{c1gnmn}
	For $n \geq 2$, let $G$ be a graph without isolated vertices and with at least $n+1$ vertices.  Then the circuit matroid $M(G)$ is $n$-connected if and only if $G$ is $n$-connected and has no cycle with fewer than $n$ edges. 
\end{theorem}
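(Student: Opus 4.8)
The plan is to work directly from Tutte's definition of matroid connectivity and translate it into the language of $G$. Recall that $M(G)$ is $n$-connected precisely when it has no $k$-separation for $1\le k\le n-1$, where a partition $(X,Y)$ of $E(G)$ is a $k$-separation if $\min\{|X|,|Y|\}\ge k$ and $\lambda(X):=r(X)+r(Y)-r(M(G))\le k-1$. For $Z\subseteq E(G)$ let $V(Z)$ denote the set of end-vertices of the edges of $Z$ and $\omega(Z)$ the number of components of the graph $(V(Z),Z)$, so that $r(Z)=|V(Z)|-\omega(Z)$; since $G$ has no isolated vertex, $V(X)\cup V(Y)=V(G)$. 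The first step is to record the resulting identity
\[ \lambda(X)=|V(X)\cap V(Y)|-\omega(X)-\omega(Y)+\omega(G), \]
and to note that, as $n\ge2$, an $n$-connected graph $G$ is connected, so $\omega(G)=1$; throughout I write $W:=V(X)\cap V(Y)$.

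For necessity, suppose $M(G)$ is $n$-connected. First, $\delta(G)\ge n$: if a vertex $v$ had degree $m<n$ then, as $G$ is connected and $n\ge2$, either $v$ is a cut vertex and $M(G)$ is disconnected, or the bond $\delta(v)$ gives $\lambda(\delta(v))\le m-1$ while $|E(G)|=m+|E(G-v)|\ge m+(|V(G)|-2)\ge 2m$, producing an $m$-separation with $m<n$ — either way contradicting $n$-connectivity. Hence $2|E(G)|=\sum_v\deg(v)\ge n|V(G)|\ge n(n+1)$, so $|E(G)|\ge 2(n-1)$ and Lemma \ref{11} applies: every circuit of $M(G)$ has at least $n$ elements, i.e. $G$ has no cycle with fewer than $n$ edges. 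Finally, if $G$ had a vertex cut $W_0$ with $|W_0|\le n-1$, choose non-empty $U_1,U_2$ with $V(G)\setminus W_0=U_1\cup U_2$ and no edge between them, and let $X$ be the edges meeting $U_1$ together with those inside $W_0$, and $Y$ the edges meeting $U_2$; then $W\subseteq W_0$, so $\lambda(X)\le|W_0|-1\le n-2$, while $\delta(G)\ge n$ gives $\min\{|X|,|Y|\}\ge n$, and $(X,Y)$ is a $|W_0|$-separation of $M(G)$ with $|W_0|\le n-1$, a contradiction. So $G$ is $n$-connected with girth at least $n$.

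For sufficiency, suppose $G$ is $n$-connected — so its vertex-connectivity, edge-connectivity and minimum degree are all at least $n$ — and has girth at least $n$, but that $(X,Y)$ is a $k$-separation of $M(G)$ with $k\le n-1$, so $\lambda(X)=|W|-\omega(X)-\omega(Y)+1\le k-1$. If both $V(X)\setminus W$ and $V(Y)\setminus W$ are non-empty, then $W$ is a vertex cut of $G$, so $|W|\ge n$ and $\omega(X)+\omega(Y)\ge|W|-k+2\ge 3$; since every vertex of $V(X)\setminus W$ spends all of its (at least $n$) edges inside a single component of $(V(X),X)$, and similarly for $Y$, estimating $|X|$ and $|Y|$ component by component — using this together with the fact that any component containing a cycle carries at least $n$ edges — forces one of $|X|,|Y|$ above the value permitted by $\lambda(X)\le k-1$. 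In the remaining case one of $V(X),V(Y)$ is contained in the other; say $V(X)\subseteq V(Y)$, so $V(Y)=V(G)$ and deleting $X$ from $G$ leaves the graph $(V(G),Y)$. Here $\omega(Y)\ge2$: otherwise $W=V(X)$ and $\lambda(X)=r(X)\le k-1\le n-2$, whereas $|X|\ge k>r(X)$ makes $X$ dependent, so $X$ contains a cycle, which by the girth hypothesis has rank at least $n-1$, a contradiction. Thus $X$ disconnects $G$, so each component of $(V(G),Y)$ meets the rest of $G$ in at least $n$ edges, all lying in $X$; balancing the resulting lower bound on $|X|$ against the bounds on $r(X)$ and on $|X|$ coming from $\lambda(X)\le k-1$ and from the girth hypothesis once more yields a contradiction. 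Hence $M(G)$ is $n$-connected.

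The step I expect to be the main obstacle is the sufficiency direction: the rank identity and the necessity argument are routine, but the two ``small side'' analyses — where $W$ is a tight vertex cut, and where one part of the separation lives entirely on $W$ — require delicate edge-counting. The content there is that $\kappa(G)\ge n$ forces every vertex degree and every bond of $G$ to have size at least $n$, so components that fail to reach $W$, or that form part of an edge cut, are necessarily edge-heavy; weighing this against $\lambda(X)\le k-1$ (and against the girth bound, which controls the number of edges in cyclic pieces) is where the constants must be pinned down. It is precisely here that the hypotheses ``$G$ has no isolated vertex'' and ``$|V(G)|\ge n+1$'' are used, to exclude the small exceptional graphs (such as $K_n$) for which the stated equivalence would otherwise fail.
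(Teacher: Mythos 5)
The paper does not prove this statement---it is quoted verbatim from Oxley's book---so there is no in-paper argument to compare yours against; I can only judge your proof on its own terms. Your setup is sound: the identity $\lambda(X)=|V(X)\cap V(Y)|-\omega(X)-\omega(Y)+\omega(G)$ is correct and is indeed the right tool, and the necessity direction is essentially complete (minimum degree $\ge n$, hence $|E(G)|\ge 2(n-1)$ so Lemma \ref{11} gives girth $\ge n$, and a vertex cut of size $\le n-1$ yields a forbidden separation). The sub-subcase of sufficiency that you do write out ($V(X)=W$ and $\omega(Y)=1$, where $X$ must contain a cycle of rank $\ge n-1$ while $r(X)\le n-2$) is also correct.

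The genuine gap is the rest of the sufficiency direction, which is where the real content of the theorem lives, and you have only gestured at it. In the case where both $V(X)\setminus W$ and $V(Y)\setminus W$ are nonempty you claim the component-by-component edge count ``forces one of $|X|,|Y|$ above the value permitted by $\lambda(X)\le k-1$''---but $\lambda(X)\le k-1$ imposes no upper bound whatsoever on $|X|$ or $|Y|$ (the separation condition only bounds them from below), so as stated this is not a contradiction at all. The contradiction you actually need is $|W|\ge \omega(X)+\omega(Y)+k-1$, i.e.\ that the vertex cut $W$ is large relative to the total number of components, and deriving that requires a concrete counting argument relating components lying entirely inside $W$ (which must carry cycles, hence $\ge n$ edges and rank $\ge n-1$ each, by the girth hypothesis) to components reaching outside $W$; none of this is carried out. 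Likewise, in the case $V(X)=W$ with $\omega(Y)\ge 2$, the bound $2|X|\ge n\,\omega(Y)$ from edge-connectivity together with $r(X)\le k-2+\omega(Y)$ does not immediately produce a contradiction (one only gets that $X$ contains a cycle, which is consistent with $\omega(Y)\ge 2$), so the promised ``balancing'' must be made explicit and may require a further idea. Until those two counts are actually closed, the sufficiency direction---which you yourself flag as the main obstacle---remains unproved.
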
 

 By Theorem \ref{c1gnmn},   the circuit matroid $M(G)$  of an $n$-connected  graph $G$ is not $n$-connected if  $G$ contain a cycle of length  less than $n$.  Therefore we derive Theorem \ref{c5slater} from Proposition \ref{c5pro} by assuming that $G$ has girth at least $n$.  

\begin{theorem}\label{c7gtth}
 Suppose $G$ is an 	$n$-connected graph of girth at least $n$, where $n\geq 2$. Let $T$ be a set of $n-1$ edges incident to a vertex of degree at least $2n-2$ in $G$. Then the $n$-point splitting graph $G_T'$ is $n$-connected.  
\end{theorem}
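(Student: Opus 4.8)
The plan is to deduce the result from Proposition~\ref{c5pro} together with the graph--matroid dictionary. Recall that $M(G)'_T = M(G'_T)$ for a set $T$ of edges at a single vertex $v$ of $G$. Since $G$ is $n$-connected of girth at least $n$, Theorem~\ref{c1gnmn} gives that $M(G)$ is $n$-connected, and $|E(M(G))| = |E(G)| \geq \deg_G(v) \geq 2n-2$; also $|T| = n-1$. So the hypotheses of Proposition~\ref{c5pro} hold for $M(G)$, and it suffices to verify that every cocircuit $Q$ of $M(G)$ meeting $T$ satisfies $|Q| \geq 2|Q \cap T|$. Granting this, Proposition~\ref{c5pro} yields that $M(G'_T) = M(G)'_T$ is $n$-connected; since $G'_T$ has $|V(G)|+1 \geq n+1$ vertices and no isolated vertex (indeed $\deg_{G'_T}(u) = n$, $\deg_{G'_T}(w) = \deg_G(v) - n + 2 \geq n$, and all other degrees are inherited from $G$), the ``only if'' direction of Theorem~\ref{c1gnmn}, applied to $G'_T$, then forces $G'_T$ to be $n$-connected.

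To check the cocircuit inequality I would translate it to bonds. A cocircuit of $M(G)$ is a bond $Q = E_G(S, V \setminus S)$ with $G[S]$ and $G[V \setminus S]$ connected. If $Q \cap T \neq \emptyset$ then, as every edge of $T$ is incident to $v$, the vertex $v$ lies on one side of the bond, say $v \in S$, and $Q \cap T$ is precisely the set of edges of $T$ from $v$ to $V \setminus S$. Write $k = |Q \cap T|$ and $d = |N_G(v) \cap (V \setminus S)|$, so $d \geq k$. Set $F = E_G(S \setminus \{v\}, V \setminus S)$, the edges of $Q$ not incident to $v$, and $P = \{\, s \in S \setminus \{v\} : N_G(s) \cap (V \setminus S) \neq \emptyset \,\}$. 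Then $Q$ is the disjoint union of the $d$ edges at $v$ and the set $F$, so $|Q| = d + |F|$, and choosing one edge of $F$ at each vertex of $P$ shows $|F| \geq |P|$.

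Now I would distinguish two cases. If $S = \{v\} \cup P$, then $N_G(v) \cap S \subseteq P$, hence $|P| \geq |N_G(v) \cap S| = \deg_G(v) - d$, and so $|Q| = d + |F| \geq d + |P| \geq \deg_G(v) \geq 2n-2 \geq 2(n-1) \geq 2k$. If instead some vertex $s_0$ lies in $S \setminus (\{v\} \cup P)$, observe that every edge of $F$ has its $S$-endpoint in $P$; consequently $G - (\{v\} \cup P)$ has no edge between $s_0$ and the non-empty set $V \setminus S$, so $\{v\} \cup P$ is a vertex cut of $G$ and $|\{v\} \cup P| \geq n$ by $n$-connectivity, i.e.\ $|P| \geq n-1$. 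Therefore $|Q| = d + |F| \geq k + (n-1) \geq 2k$, using $k \leq |T| = n-1$. In either case $|Q| \geq 2|Q \cap T|$, which is what was needed.

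The one point requiring care is the case split: the vertex-cut argument produces a cut of $G$ of size $|P| + 1 \geq n$ only when $S \setminus (\{v\} \cup P) \neq \emptyset$, since otherwise $\{v\} \cup P = S$ and deleting it merely leaves the connected graph $G[V \setminus S]$; in that degenerate situation one must instead use the hypothesis $\deg_G(v) \geq 2n-2$. The remaining ingredients — the identity $M(G)'_T = M(G'_T)$, the two applications of Theorem~\ref{c1gnmn}, and the degree computations for $u$ and $w$ — are routine.
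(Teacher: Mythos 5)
Your proposal is correct and follows essentially the same route as the paper: reduce to Proposition \ref{c5pro} via Theorem \ref{c1gnmn}, then verify the bond inequality $|Q|\ge 2|Q\cap T|$ by exhibiting a vertex cut of $G$ built from $v$ together with endpoints of the crossing edges. The only difference is cosmetic: you argue directly with a case split, taking the cut on $v$'s side of the bond and invoking the degree hypothesis only in the degenerate case $S=\{v\}\cup P$, whereas the paper runs a single contradiction argument with the cut taken on the far side; both are sound.
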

\begin{proof} 	
Let $M=M(G)$. Then $M_T'=M(G_T')$. We prove that $M_T'$ is $n$-connected.  By Theorem \ref{c1gnmn}, $M$ is $n$-connected.  Let $Q$ be a cocircuit of $M$ intersecting $T$. By Proposition  \ref{c5pro}, it is sufficient to prove that $|Q| \geq 2|Q\cap T|$.  On the contrary, assume that  $|Q| < 2|Q\cap T|$.  	As $Q=(Q-T) \cup (Q \cap T)$, 	$|Q|=  \cfrac{|Q|}{2}  +\cfrac{|Q|}{2}=|Q-T|+|Q\cap T|$ and hence	$|Q-T| < \cfrac{|Q|}{2} <|Q\cap T|.$  Let $u$ be the vertex of $G$ of degree at least $2n-2$ such that the edges of $G$ belonging to $T$ are incident to $u$. 	Since $Q$ is a cocircuit of $M(G)$, the graph $G-Q$ is disconnected and it has two components, say $C_1$ and $C_2$.  We may assume that $C_2$ contains the vertex $u$.  Let $Q\cap T =\{uu_1,uu_2,\dots, uu_k\}.$ Then $u_1, u_2, \dots, u_k$ are vertices of $C_1$. Let $v_1,v_2, \dots, v_r$ be the end vertices of the edges belonging to $Q-T$ in $C_1$. Then $r \leq |Q-T|<|Q\cap T|.$  Since $|Q| < 2|Q\cap T| \leq 2|T| =2n-2$ and degree of $u$ is at least $2n-2$,  there is at least one edge $uw$ incident to $u$ in $G-Q$.  Then the edge $uw$ is in $C_2$.  Let $A=\{v_1,v_2, \dots, v_r, u\}$. Then $G-A$ is disconnected, leaving $u_i$ for some $i\in \{1,2,\dots, k\}$ in one component and the vertex $w$ is in an another component.   However,  $|A|= r+1\leq |Q \cap T| \leq |T|=n-1,$   a contradiction to the fact that $G$ is $n$-connected.  Thus  $M_T'$ is $n$-connected. By Theorem \ref{c1gnmn}, $G_T'$ is $n$-connected.  \end{proof}


\begin{corollary} \label{c5cors}\label{c5corr}
Let $G$ be a $3$-connected  simple graph and $T$ be a set of two edges incident to a vertex  of $G$ of degree at least four. Then the graph $G'_T$ is $3$-connected. 
\end{corollary}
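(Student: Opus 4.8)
The plan is to derive this directly from Theorem \ref{c7gtth} with $n=3$, after checking that the hypotheses of that theorem are met. Set $n=3$. We are given that $G$ is $3$-connected and simple, and that $T=\{vv_1,vv_2\}$ consists of two edges incident to a vertex $v$ of degree at least $4=2n-2$. So $|T|=2=n-1$ and the degree condition of Theorem \ref{c7gtth} holds. The one remaining hypothesis to verify is that $G$ has girth at least $n=3$, i.e.\ that $G$ has no cycle with fewer than $3$ edges. But a cycle with fewer than $3$ edges is either a loop (a cycle of length $1$) or a pair of parallel edges (a cycle of length $2$), and a simple graph has neither. Hence $G$ has girth at least $3$.

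With all hypotheses of Theorem \ref{c7gtth} verified for $n=3$, we conclude immediately that the $3$-point splitting graph $G_T'$ is $3$-connected. This is exactly the assertion of the corollary. One small point worth noting for completeness: in order to invoke the underlying matroid machinery (Theorem \ref{c1gnmn}) inside the proof of Theorem \ref{c7gtth}, one needs $G$ to have no isolated vertices and at least $n+1=4$ vertices; both are automatic here, since a $3$-connected graph has minimum degree at least $3$ (hence no isolated vertices) and must have at least $4$ vertices.

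There is essentially no obstacle: the entire content is the observation that ``simple'' is precisely the combinatorial translation of ``girth at least $3$'' (together with the trivial vertex-count bookkeeping), so the corollary is a direct specialization of Theorem \ref{c7gtth}. If one wanted to write it out in full, the only thing to be careful about is making the girth translation explicit, since Theorem \ref{c7gtth} is stated in terms of girth rather than simplicity.
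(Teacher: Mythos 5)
Your proposal is correct and is exactly the argument the paper intends: the corollary is the case $n=3$ of Theorem \ref{c7gtth}, with ``simple'' supplying the girth-at-least-$3$ hypothesis and the degree and $|T|$ conditions matching $2n-2=4$ and $n-1=2$. The paper omits the proof precisely because it is this immediate specialization, so no further comment is needed.
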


We now prove that one can obtain a 3-regular, 3-connected graph from the given 3-connected simple graph by repeated applications of 3-point splitting operation.
\begin{corollary} A 3-regular, $3$-connected simple graph can be obtained from the given $3$-connected simple graph by a finite sequence of the 3-point splitting operation.
\end{corollary}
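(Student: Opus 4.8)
The plan is to argue by induction on the number of vertices of $G$ (equivalently, on $|E(G)|$, since a $3$-connected graph has no isolated vertices), using Corollary \ref{c5corr} to guarantee that each splitting step preserves $3$-connectivity. If $G$ is already $3$-regular there is nothing to do, so we may assume $G$ has a vertex $v$ of degree $d \geq 4$. I would pick two edges $e_1 = vv_1$ and $e_2 = vv_2$ incident to $v$ and set $T = \{e_1, e_2\}$; since $d \geq 4 = 2\cdot 3 - 2$, Corollary \ref{c5corr} applies and the $3$-point splitting graph $G'_T$ is $3$-connected. Moreover $G'_T$ is still simple: the new vertices $u$ (adjacent to $v_1$, $v_2$ and $w$) and $w$ (adjacent to $u$ and to the remaining $d-2 \geq 2$ neighbours of the old $v$) introduce no loops, and a parallel edge could only arise at $u$ or $w$, which a short check rules out when $G$ is simple and $v_1 \neq v_2$ (we may choose $e_1, e_2$ so that $v_1 \neq v_2$, which is possible because $G$ simple forces all neighbours of $v$ to be distinct).

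The key bookkeeping is that the $3$-point split replaces the degree-$d$ vertex $v$ by a vertex $u$ of degree $3$ (neighbours $v_1, v_2, w$) and a vertex $w$ of degree $d-2+1 = d-1$ (the $d-2$ old neighbours of $v$ other than $v_1, v_2$, plus $u$). Thus one vertex of degree $d$ is traded for one vertex of degree exactly $3$ and one vertex of degree $d-1$, while the degrees of all other vertices are unchanged, and exactly one new vertex is added. Define a potential such as $\Phi(G) = \sum_{x \in V(G)} (\deg(x) - 3)$, the total excess degree; since $G$ is $3$-connected every vertex has degree $\geq 3$, so $\Phi(G) \geq 0$, and $\Phi(G) = 0$ precisely when $G$ is $3$-regular. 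Each splitting step as above changes the multiset of excess degrees by removing $d-3$ and inserting $d-4$ (the excess of the new $w$; the new $u$ contributes $0$), so $\Phi$ strictly decreases by $1$ at each step. Hence after finitely many $3$-point splittings we reach a graph with $\Phi = 0$, which is the desired $3$-regular, $3$-connected simple graph.

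The main obstacle, and the only place any real care is needed, is verifying that simplicity is preserved and — more subtly — that at each stage the graph still has a vertex of degree $\geq 4$ on which Corollary \ref{c5corr} can legitimately be invoked (degree at least four), which is exactly the condition $\Phi(G) > 0$; this is automatic from the potential argument. One should also dispose of the trivial base case: a $3$-connected simple graph has at least four vertices, and if it is already $3$-regular the empty sequence suffices. Everything else is the routine degree accounting sketched above, together with the observation that Corollary \ref{c5corr} requires only the existence of one high-degree vertex and two incident edges, both of which we have supplied.
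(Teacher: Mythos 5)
Your proposal is correct and follows essentially the same route as the paper: repeatedly apply the $3$-point splitting at a vertex of degree $d\geq 4$ using two incident edges, invoke Corollary \ref{c5corr} to preserve $3$-connectivity, and observe that the split trades a degree-$d$ vertex for one of degree $3$ and one of degree $d-1$, so the process terminates. Your explicit potential $\Phi(G)=\sum_x(\deg(x)-3)$ and your check that simplicity is preserved (needed to re-invoke Corollary \ref{c5corr} at each stage) make the termination and iteration steps more airtight than the paper's brief sketch, but the underlying argument is the same.
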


\begin{proof} Let $G$ be a $3$-connected simple graph. Then degree of every vertex of $G$ is at least three. Suppose  $G$ contains a vertex $v$ of degree $ k > 3.$ Let $T= \{x, y\}$ be a set of two edges incident at $v$.  By Corollary \ref{c5corr}, $G'_T$ is $3$-connected. 	The vertex of $v$ of $G$ is replaced by two vertices $v'$  and $v''$ with degrees $3$ and $k-1,$ respectively in $G_T'$.  Thus one application of  3-point splitting on a vertex  of degree $t>3$ results into a 3-connected graph with one additional vertex of degree less than $t$. By a finite sequence of  3-point splitting operation we can get a 3-connected graph with no vertex of degree greater than three.  Clearly, this graph will be 3-regular. \end{proof}

\end{document}